\numberwithin{theorem}{section}
\crefname{assumption}{Assumption}{Assumptions}
\crefname{remark}{Remark}{Remarks}
\crefname{example}{Example}{Examples}
\newcommand\cI{\mathcal{I}}
\newcommand\cN{\mathcal{N}}
\newcommand\cT{\mathcal{T}}
\newcommand\tu{\tilde{u}}
\newcommand\intO{\int_{\Omega}}
\title{Additive Schwarz methods for semilinear elliptic problems with convex energy functionals: Convergence rate independent of nonlinearity\thanks{Submitted to arXiv
%\thanks{Submitted to the editors DATE.
%\funding{No funding for this research.}
}}
\author{
Jongho Park\thanks{Computer, Electrical and Mathematical Science and Engineering Division, King Abdullah University of Science and Technology~(KAUST), Thuwal 23955, Saudi Arabia
 (\email{jongho.park@kaust.edu.sa}, \url{https://sites.google.com/view/jonghopark}).}
}
\begin{document}

\maketitle

% Abstract
\begin{abstract}
We investigate additive Schwarz methods for semilinear elliptic problems with convex energy functionals, which have wide scientific applications.
A key observation is that the convergence rates of both one- and two-level additive Schwarz methods have bounds independent of the nonlinear term in the problem.
That is, the convergence rates do not deteriorate by the presence of nonlinearity, so that solving a semilinear problem requires no more iterations than a linear problem.
Moreover, the two-level method is scalable in the sense that the convergence rate of the method depends on $H/h$ and $H/\delta$ only, where $h$ and $H$ are the typical diameters of an element and a subdomain, respectively, and $\delta$ measures the overlap among the subdomains.
Numerical results are provided to support our theoretical findings.
\end{abstract}

% Keywords
\begin{keywords}
Additive Schwarz methods, Semilinear elliptic problems, Convex optimization, Convergence analysis, Domain decomposition methods
\end{keywords}

% AMS classification
\begin{AMS}
65N55, 65J15, 35J61, 90C25
\end{AMS}

% Section: Introduction
\section{Introduction}
\label{Sec:Introduction}
% Semilinear problems
The objective of this paper is to conduct an analysis of overlapping domain decomposition methods for solving a specific class of nonlinear elliptic partial differential equations.
Our problem of interest is a semilinear elliptic boundary value problem that accommodates a variational formulation of the following form:
\begin{equation}
\label{model}
    \min_{u \in H_0^1 (\Omega)} \left\{ E(u) := \int_{\Omega} \left( \frac{1}{2} | \nabla u |^2 + \phi (x, u) \right) \,dx \right\},
\end{equation}
where $\Omega \subset \mathbb{R}^d$~($d = 2,3)$ is a bounded polyhedral domain, and $\phi(x,y) \colon \Omega \times \mathbb{R} \rightarrow \mathbb{R}$ is a smooth function that is convex with respect to $y$.
Problems of the form~\eqref{model} include many important problems from diverse fields: the Poisson--Boltzmann equation in electrostatics and biomolecular modeling~\cite{CHX:2007,LZHM:2008}, nonlinear Schr\"{o}dinger equation in optics and the wave theory~\cite{ZM:1974}, Liouville equation in differential geometry~\cite{DFN:1992}, and shape optimization problems in structural mechanics~\cite{HS:2016}.
Mathematical theories of the variational approach for semilinear elliptic problems can be found in, e.g.,~\cite{BS:2011}.

% Two-grid method and nonlinearlity
An important breakthrough in numerical solutions for semilinear elliptic problems is the two-grid method~\cite{Xu:1994,Xu:1996}.
This method demonstrated that the nonlinearity inherent in a semilinear elliptic problem can be \textit{coarsened}.
Namely, by discretizing the equation using two finite element spaces~(fine and coarse), it was shown that handling the nonlinearity only on the coarse grid is sufficient to achieve an approximate solution with accuracy comparable to that obtained using the fine grid.
In this context,~\cite{Xu:1996} concluded that
\begin{quote}
``This means that solving a nonlinear equation is not much more difficult than solving one linear equation."
\end{quote}
This paper takes a different perspective on nonlinearity, reaching a similar conclusion: we establish that the nonlinearity of a semilinear elliptic problem can be \textit{localized}.
Specifically, we show that, when employing a domain decomposition method~\cite{TW:2005} with nonlinear local solvers to solve a semilinear elliptic problem, the nonlinear term does not affect the global convergence rate of the algorithm.
That is, solving a semilinear problem requires no more iterations than a linear problem.

% Additive Schwarz methods
In this paper, we study additive Schwarz methods for solving the variational problem~\eqref{model}.
In additive Schwarz methods, a computational domain is partitioned into a number of subdomains, and local problems defined on the subdomains are solved in parallel.
Then the computed local solutions are assembled to form a correction for the global solution.
Hence, they are suitable for massively parallel computation, and have been applied for numerical solutions of various partial differential equations~\cite{DCPS:2012,DW:2009,HKKR:2019,HKKRW:2022,Oh:2013}.
In particular, convergence theories for additive Schwarz methods for convex optimization problems were developed in~\cite{Badea:2010,Park:2020,Park:2022a,TX:2002}, and then applied to several nonlinear problems, including the $p$-Laplacian equation~\cite{LP:2022} and elliptic optimal control~\cite{Park:2023}.

% In this paper...
To analyze additive Schwarz methods for the smooth convex optimization problem~\eqref{model}, adopting the smooth theory established in~\cite{TX:2002} emerges as a natural approach.
However, in this case, the nonlinear term $\phi (x, u)$ in~\eqref{model} may result in poor convergence rates.
For example, in the nonlinear Poisson--Boltzmann equation, $\phi (x, u)$ is given by a hyperbolic function with rapid exponential nonlinearity~\cite{CHX:2007}, so that it can potentially yield a very large condition number of the entire energy functional.
To address this issue, we present a novel perspective in this paper; we leverage the nonsmooth theory developed in~\cite{BK:2012,Park:2020} to analyze the smooth problem~\eqref{model}.
More precisely, we analyze the methods by classifying the quadratic and $\phi$-terms in the energy functional as smooth and nonsmooth, respectively.
We construct stable decompositions suitable for the nonsmooth framework in terms of the nonlinear positivity-preserving coarse interpolation operator introduced in~\cite{Badea:2006,Tai:2005}.
Finally, we derive estimates for the convergence rates of the methods, and importantly, these estimates remain independent of the $\phi$-term~(see \cref{Thm:1L,Thm:2L}).
That is, we establish that the convergence rates of additive Schwarz methods for~\eqref{model} are uniformly bounded regardless of the specific instance of the nonlinear term $\phi$.
This result is particularly surprising, as it contradicts our common intuition that nonlinearity typically significantly influences the convergence performance of iterative methods for nonlinear problems.
In addition, \cref{Thm:2L} states that the convergence rate of the two-level additive Schwarz method depends on $H/h$ and $H/\delta$ only, where $h$ and $H$ are the typical diameters of an element and a subdomain, respectively.
This implies that the two-level method is scalable; even when the element size $h$ is very small, we do not need a large number of iterations to obtain a solution with a desired level of accuracy if we have a sufficient number of subdomains.

% Paper organization
The rest of this paper is organized as follows.
In \cref{Sec:Model}, we characterize semilinear elliptic problems that accommodate variational formulations of the form~\eqref{model}, and introduce their finite element discretizations.
In \cref{Sec:ASM}, we present the abstract framework of additive Schwarz methods for convex optimization introduced in~\cite{Park:2020} in a way suitable for our problem of interest.
In \cref{Sec:DD}, we present convergence analyses of one- and two-level additive Schwarz methods for semilinear elliptic problems based on overlapping domain decomposition.
In \cref{Sec:Numerical}, we present numerical results for various semilinear elliptic problems that verify our theoretical findings.
Finally, we conclude the paper with discussions in \cref{Sec:Conclusion}.

% Section: Model problem
\section{Model problem}
\label{Sec:Model}
In this section, we briefly introduce semilinear elliptic problems and their variational formulations.
Based on the variational formulations, we derive finite element discretizations and their error estimates.

We first introduce notation that will be used throughout the paper.
Let $V$ be a Hilbert space equipped with the norm $\| \cdot \|_V$.
The topological dual space of $V$ is denoted by $V^*$, and $\langle \cdot, \cdot \rangle_{V^* \times V}$ denotes the duality pairing of $V$, i.e.,
\begin{equation*}
\langle f, u \rangle_{V^* \times V} = f(u),
\quad u \in V, \text{ } f \in V^*.
\end{equation*}
We may omit the subscripts if there is no ambiguity.
For a smooth convex functional $F \colon V \rightarrow \mathbb{R}$ and $u \in V$, let $F' (u) \in V^*$ denote the G\^{a}teaux derivative of $F$ at $u$, i.e.,
\begin{equation*}
 \langle F'(u), v \rangle = \lim_{t \rightarrow 0} \frac{F(u + tv) - F(u)}{t},
 \quad v \in V.
\end{equation*}
The Bregman distance of $F$ is defined by
\begin{equation}
\label{Bregman}
D_F (u, v) = F(u) - F(v) - \langle F'(v), u-v \rangle,
\quad u, v \in V.
\end{equation}

% Subsection: Semilinear elliptic problems
\subsection{Semilinear elliptic problems}
A general second-order semilinear elliptic problem on $\Omega$ with the homogeneous Dirichlet boundary condition is written as 
\begin{equation} \begin{split}
\label{semilinear}
- \Delta u + f(x, u) = 0 \quad &\text{ in } \Omega, \\
u = 0 \quad &\text{ on } \partial \Omega,
\end{split} \end{equation}
where $f(x,y) \colon \Omega \times \mathbb{R} \rightarrow \mathbb{R}$ is a sufficiently regular function.
It is well-known that, under some mild assumptions on $f$, a solution of~\eqref{semilinear} can be characterized as a critical point of an energy functional~\cite{BS:2011}.
In particular, if we have
\begin{equation}
\label{semilinear_convex}
\intO f(x, u) v\,dx = \left< \left( \intO \phi (x, \cdot (x)) \,dx \right)' (u), v \right>, \quad u, v \in H_0^1 (\Omega),
\end{equation}
then~\eqref{semilinear} admits~\eqref{model} as its variational formulation.
Hence, under~\eqref{semilinear_convex}, we may consider the convex optimization problem~\eqref{model} in order to obtain the solution of the semilinear elliptic problem~\eqref{semilinear}.
The following proposition summarizes a sufficient condition for~\eqref{semilinear_convex}.

% Proposition: Semilinear and convex
\begin{proposition}
\label{Prop:semilinear_convex}
In~\eqref{model} and~\eqref{semilinear}, suppose that the following hold:
\begin{enumerate}[label=\emph{(\roman*)}]
\item $\phi (\cdot, u(\cdot)) \in L^1 (\Omega)$ and $f( \cdot, u(\cdot)) \in L^2 (\Omega)$ for any $u \in H_0^1 (\Omega)$,
\item $\phi (x,y)$ is convex with respect to $y$,
\item $f(x,y) = \frac{\partial \phi}{\partial y} (x,y)$.
\end{enumerate}
Then the map $u \mapsto \intO \phi (x, u(x)) \,dx$ is G\^{a}teaux differentiable and~\eqref{semilinear_convex} holds.
Consequently,~\eqref{semilinear} admits the convex variational formulation~\eqref{model}.
\end{proposition}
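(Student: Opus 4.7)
The plan is to verify Gâteaux differentiability of the functional $\Phi(u) := \intO \phi(x, u(x))\,dx$ at an arbitrary $u \in H_0^1(\Omega)$ in an arbitrary direction $v \in H_0^1(\Omega)$, and to identify its derivative with the map $v \mapsto \intO f(x, u(x)) v(x)\,dx$. Once this pointwise-in-direction limit is established, identity~\eqref{semilinear_convex} is immediate, and combining it with the standard derivative of $u \mapsto \frac{1}{2}\intO |\nabla u|^2\,dx$ yields the Euler--Lagrange characterization, so~\eqref{semilinear} is the first-order optimality condition for~\eqref{model}.

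First, I would form the pointwise difference quotient
\begin{equation*}
q_t(x) := \frac{\phi(x, u(x) + tv(x)) - \phi(x, u(x))}{t}, \qquad t \neq 0,
\end{equation*}
and observe that, by hypothesis~(iii), $q_t(x) \to f(x, u(x)) v(x)$ for a.e.\ $x \in \Omega$ as $t \to 0$. The crux is then to justify interchanging this limit with the integral $\intO q_t(x)\,dx$.

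Second, I would extract integrable envelopes from convexity. For $0 < t \le 1$, writing $u + tv = (1-t) u + t(u+v)$ and using hypothesis~(ii) gives the upper bound $q_t(x) \le \phi(x, u(x) + v(x)) - \phi(x, u(x))$; the subgradient inequality for convex $\phi(x, \cdot)$ at $u(x)$ gives the lower bound $q_t(x) \ge f(x, u(x)) v(x)$. Hypothesis~(i) makes the upper envelope a member of $L^1(\Omega)$, while $f(\cdot, u) \in L^2(\Omega)$ and $v \in L^2(\Omega)$ (by the Sobolev embedding $H_0^1(\Omega) \hookrightarrow L^2(\Omega)$) make the lower envelope $L^1(\Omega)$ via Cauchy--Schwarz. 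The symmetric argument for $-1 \le t < 0$ reverses the roles but yields the same integrable two-sided control on $|q_t|$.

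Third, I would invoke the dominated convergence theorem (equivalently, monotone convergence, since convexity of $\phi(x,\cdot)$ makes $t \mapsto q_t(x)$ monotone on each half-line) to conclude
\begin{equation*}
\lim_{t \to 0} \intO q_t(x)\,dx = \intO f(x, u(x)) v(x)\,dx,
\end{equation*}
which is both the Gâteaux differentiability of $\Phi$ at $u$ in direction $v$ and the identity~\eqref{semilinear_convex}. The one nontrivial step is this passage to the limit under the integral; everything else is bookkeeping. Convexity in $y$ together with the very mild integrability in~(i) is exactly what supplies the monotonicity and the $L^1$-envelope needed to legitimize the exchange, so the proof essentially reduces to recognizing that the hypotheses have been tailored for precisely this one application of dominated convergence.
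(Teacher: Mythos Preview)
Your proposal is correct and follows essentially the same strategy as the paper: establish the pointwise limit of the difference quotient $q_t$ via~(iii), extract integrable two-sided envelopes from convexity~(ii) and the integrability hypotheses~(i), and apply dominated convergence. The only cosmetic difference is in the envelopes: the paper packages the monotonicity of the secant slope into a separate lemma and bounds $q_t$ on both sides by $\pm\lvert \phi(x,u\pm|v|)-\phi(x,u)\rvert$, using only the $L^1$ part of~(i), whereas you use the subgradient inequality (hence the $L^2$ part of~(i) via Cauchy--Schwarz) for one side and the convex interpolation bound for the other.
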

\begin{proof}
See \cref{App:Semilinear_convex}.
\end{proof}

We present two simple applications of \cref{Prop:semilinear_convex} in the following.

% Example: Monomial equation
\begin{example}
\label{Ex:monomial}
Set $f(x,y) = \alpha |y|^{m-2} y - g(x)$ in~\eqref{semilinear} for some $\alpha \geq 0$ and $m \in \mathbb{Z}_{\geq 2}$.
Then we have
\begin{equation*}
- \Delta u + \alpha |u|^{m-2} u = g.
\end{equation*}
Note that this problem was considered in, e.g.,~\cite{Xu:1994}.
In this case,~\eqref{semilinear_convex} holds with $\phi (x,y) = \frac{\alpha}{m} |y|^m - g(x) y$.
\end{example}

% Example: Poisson--Boltzmann equation
\begin{example}
\label{Ex:PB}
Set $f(x,y) = \sinh y - g(x)$ in~\eqref{semilinear} for some $g \in L^2 (\Omega)$.
Then we have the nonlinear Poisson--Boltzmann equation~\cite{CHX:2007,LZHM:2008} as follows:
\begin{equation*}
- \Delta u + \sinh u = g.
\end{equation*}
In this case,~\eqref{semilinear_convex} holds with $\phi (x,y) = \cosh y - g(x) y$.
\end{example}

Assuming~\eqref{semilinear_convex}, the minimization problem~\eqref{model} is equivalent to the following: Find $u \in H_0^1 (\Omega)$ such that
\begin{equation}
\label{model_weak}
    a(u,v) + b(u,v) = 0, \quad v \in H_0^1 (\Omega),
\end{equation}
where $a(\cdot, \cdot) \colon H_0^1 (\Omega) \times H_0^1 (\Omega) \rightarrow \mathbb{R}$ and $b(\cdot, \cdot) \colon H_0^1 (\Omega) \times H_0^1 (\Omega) \rightarrow \mathbb{R}$ are given by
\begin{equation*}
    a(u,v) = \intO \nabla u \cdot \nabla v \,dx, \quad
    b(u,v) = \intO f(x, u) v \,dx, \quad
    u,v \in H_0^1 (\Omega).
\end{equation*}
Note that $b$ is not linear with respect to its first argument.

% Subsection: Finite element discretization
\subsection{Finite element discretization}
Now, we consider a finite element discretization of the variational formulation~\eqref{model}.
We assume that the domain $\Omega$ admits a quasi-uniform triangulation $\cT_h$ with $h$ the characteristic element diameter.
Let $S_h (\Omega)$ be the collection of continuous and piecewise linear functions on $\cT_h$ vanishing on $\partial \Omega$.
The nodal interpolation operator $I_h$ onto the finite element space $S_h (\Omega)$ is well-defined for functions in $C^0 (\overline{\Omega})$.
We introduce the following finite element discretization of~\eqref{model} defined on the finite element space $S_h (\Omega)$:
\begin{equation}
\label{model_FEM}
    \min_{u \in S_h (\Omega)} \left\{ E_h(u) := \int_{\Omega} \left( \frac{1}{2} | \nabla u |^2 + I_h \left( \phi (\cdot, u) \right) \right) \,dx \right\}.
\end{equation}
In~\eqref{model_FEM}, compared to the continuous problem~\eqref{model}, the solution space is replaced with the finite element space $S_h (\Omega)$ and the $\phi$-term is replaced with its approximation using the trapezoidal rule on the simplicial mesh $\cT_h$.
By applying the trapezoidal rule to the $\phi$-term, the integral of this term becomes separable with respect to each nodal degree of freedom of $S_h (\Omega)$.
To be more precise, we have
\begin{equation}
\begin{split}
    \label{separable}
    \intO I_h \left( \phi (\cdot, u) \right) \,dx
    &= \sum_{T \in \cT_h} \int_T I_h (\phi (\cdot, u)) \,dx \\
    &= \sum_{T \in \cT_h} \sum_{x \in \cN_T} \frac{|T|}{3} \phi (x, u(x)) \\
    &= \sum_{x \in \cN_h} \sum_{T \in \cT_x} \frac{|T|}{3} \phi (x, u(x)) \\
    &= \sum_{x \in \cN_h} \phi_h^x (u(x)),
\end{split}
\end{equation}
where $\cN_T$ is the set of vertices of an element $T$, 
$\cN_h$ denotes the set of vertices in the triangulation $\cT_h$,
$\cT_x$ is the collection of all elements that share a vertex $x$,
and $\phi_h^x \colon \mathbb{R} \rightarrow \mathbb{R}$ is a function defined by
\begin{equation*}
    \phi_h^x (y) = \sum_{T \in \cT_x} \frac{|T|}{3} \phi (x, y), \quad y \in \mathbb{R},
\end{equation*}
for each $x \in \cN_h$ and $h > 0$.
Since $\phi (x,y)$ is convex with respect to $y$, we deduce that $\phi_h^x$ is convex.
The separation property expressed in~\eqref{separable} will play a critical role in establishing convergence rates of additive Schwarz methods independent of the $\phi$-term.

% Remark: Higher-order elements
\begin{remark}
\label{Rem:higher}
The separation property of the $\phi$-term in~\eqref{model_FEM} can be established for higher-order finite elements, assuming that the numerical integration on each element is computed by a convex combination of nodal values.
Consider the case when $S_h (\Omega)$ is given by a higher-order Lagrangian finite element space, and $I_h$ is the nodal interpolation operator onto $S_h (\Omega)$.
Since the numerical integration of $I_h (\phi (\cdot, u))$ over an element $T \in \mathcal{T}_h$ is expressed as a positive linear combination of the nodal values of $\phi (\cdot, u)$, one can derive the separation property for the $\phi$-term similar to~\eqref{separable}.
\end{remark}

It is easy to verify that~\eqref{model_FEM} admits a unique solution, say $u_h \in S_h (\Omega)$.
An error estimate for $u_h$ compared with the continuous solution $u \in H_0^1 (\Omega)$ of~\eqref{model} is summarized in \cref{Thm:FEM}.

% Theorem: FEM error estimate
\begin{theorem}
\label{Thm:FEM}
Let $u \in H_0^1 (\Omega)$ and $u_h \in S_h (\Omega)$ be the solutions of~\eqref{model} and~\eqref{model_FEM}, respectively.
In addition, let $\tu_h \in S_h (\Omega)$ be a solution of the minimization problem
\begin{equation}
\label{model_FEM_alt}
\min_{u \in S_h (\Omega)} E(u).
\end{equation}
Suppose that the assumptions in \cref{Prop:semilinear_convex} and the following hold:
\begin{enumerate}[label=\emph{(\roman*)}]
\item $u \in H^2 (\Omega)$,
\item $v \mapsto f(\cdot, v)$ is Fr\'{e}chet differentiable.
\end{enumerate}
Then there exists a positive constant $C$ independent of $h$ such that
\begin{equation}
\label{Thm1:FEM}
\| u - u_h \|_{H^1 (\Omega)}^2 \leq C h^2 \bigg( 1 + \max_{T \in \cT_h} \| \phi (\cdot, u_h) \|_{W^{2, \infty} (T)}
+ \max_{T \in \cT_h} \| \phi (\cdot, \tu_h) \|_{W^{2, \infty} (T)} \bigg).
\end{equation}
\end{theorem}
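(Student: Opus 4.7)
My plan is to split the error via the triangle inequality,
$\|u - u_h\|_{H^1(\Omega)}^2 \leq 2\|u - \tu_h\|_{H^1(\Omega)}^2 + 2\|\tu_h - u_h\|_{H^1(\Omega)}^2$,
and estimate the two pieces separately: the first by a standard best-approximation argument for the exact energy $E$ on the finite element space $S_h(\Omega)$, and the second by a Strang-type consistency estimate that quantifies the trapezoidal quadrature error $E - E_h$. Throughout I write $E = Q + \Phi$ with $Q(v) = \frac{1}{2}\intO |\nabla v|^2\,dx$ and $\Phi(v) = \intO \phi(x, v(x))\,dx$; since $\Phi$ is convex one has $D_\Phi \geq 0$, hence $D_E(v, w) \geq \frac{1}{2}\|\nabla(v-w)\|_{L^2(\Omega)}^2$, and Poincar\'{e}'s inequality upgrades this to control of $\|v - w\|_{H^1(\Omega)}^2$ whenever $v - w \in H_0^1(\Omega)$.

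For the first piece, I would exploit $E'(u) = 0$ together with the minimality of $\tu_h$ for $E$ on $S_h(\Omega)$ to derive
$\frac{1}{2}\|\nabla(u - \tu_h)\|_{L^2(\Omega)}^2 \leq D_E(\tu_h, u) = E(\tu_h) - E(u) \leq E(I_h u) - E(u) = D_E(I_h u, u)$.
The quadratic part is $O(h^2 |u|_{H^2(\Omega)}^2)$ by standard linear interpolation, while the Bregman term $D_\Phi(I_h u, u)$ is controlled by a second-order Taylor expansion of $\phi$ in its second argument combined with $\|I_h u - u\|_{L^2(\Omega)}^2 = O(h^4 |u|_{H^2(\Omega)}^2)$. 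Since $u \in H^2(\Omega) \hookrightarrow L^\infty(\Omega)$ for $d \leq 3$ and $\phi$ is smooth, the relevant bounds on $\phi_{yy}$ along the segment from $u$ to $I_h u$ are $h$-independent and absorbed into $C$; this gives $\|u - \tu_h\|_{H^1(\Omega)}^2 \leq Ch^2$, which contributes to the ``$1$'' on the right-hand side of~\eqref{Thm1:FEM}.

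For the second piece, I would use a Galerkin-type identity: because $\tu_h$ is a critical point of $E$ on $S_h(\Omega)$ and $u_h - \tu_h \in S_h(\Omega)$, we have $D_E(u_h, \tu_h) = E(u_h) - E(\tu_h)$. A classical Strang-type insertion then yields
$D_E(u_h, \tu_h) \leq [E(u_h) - E_h(u_h)] + [E_h(\tu_h) - E(\tu_h)]$,
where the cross term $E_h(u_h) - E_h(\tu_h) \leq 0$ vanishes by the $E_h$-minimality of $u_h$. Each remaining bracket is precisely the elementwise trapezoidal quadrature error $\intO (\phi(\cdot, v) - I_h \phi(\cdot, v))\,dx$ applied to $v \in S_h(\Omega)$, and on each element $T$ this error is bounded by $C h_T^2 |T|\,\|\phi(\cdot, v)\|_{W^{2,\infty}(T)}$; summing over $\cT_h$ produces exactly the two $\phi$-dependent terms in~\eqref{Thm1:FEM}. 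The lower bound $D_E(u_h, \tu_h) \geq \frac{1}{2}\|\nabla(u_h - \tu_h)\|_{L^2(\Omega)}^2$ then closes the estimate after a further Poincar\'{e} step.

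The main obstacle I foresee is the treatment of $D_\Phi(I_h u, u)$ in the best-approximation step, where the Taylor remainder naturally involves $\phi_{yy}(\cdot, \xi)$ for some $\xi$ between $u$ and $I_h u$; a clean bound requires this coefficient to be $h$-independent. I resolve this by invoking the Sobolev embedding $H^2(\Omega) \hookrightarrow L^\infty(\Omega)$ for $d \in \{2, 3\}$, the uniform convergence $I_h u \to u$ for $h$ small, and smoothness of $\phi$, which together guarantee a uniform bound on $\phi_{yy}$ in a neighborhood of the range of $u$ that can be subsumed into the constant $C$. All remaining steps are routine interpolation and quadrature-error estimates.
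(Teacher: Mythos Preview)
Your proposal is correct and follows essentially the same overall strategy as the paper: split via $\tu_h$, handle $\|u-\tu_h\|_{H^1}$ by a best-approximation argument, and handle $\|\tu_h-u_h\|_{H^1}$ by a Strang-type consistency estimate that reduces to the elementwise trapezoidal error $|E(v)-E_h(v)|\leq Ch^2\max_T\|\phi(\cdot,v)\|_{W^{2,\infty}(T)}$. The second piece is argued identically in both (the paper writes it as two strong-convexity inequalities, you as a single Bregman identity; the algebra is the same). The one genuine difference is the first piece: the paper simply cites an external result of Xu (1994), whose proof linearizes around $u$ and uses assumption~(ii) to guarantee that $-\Delta+f'(\cdot,u)$ is an isomorphism, whereas you give a self-contained energy argument $D_E(\tu_h,u)\le D_E(I_hu,u)$ and then control $D_\Phi(I_hu,u)$ by Taylor expansion plus $\|I_hu-u\|_{L^2}^2=O(h^4)$. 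Your route is more elementary and does not invoke~(ii) directly, at the price of needing a uniform bound on $\phi_{yy}$ near the range of $u$; this is exactly what you obtain from $H^2\hookrightarrow L^\infty$ (valid for $d\le 3$) and the stated smoothness of~$\phi$, so the argument closes.
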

\begin{proof}
See \cref{App:FEM}.
\end{proof}

% Remark: FEM error estimate
\begin{remark}
\label{Rem:FEM}
The right-hand side of~\eqref{Thm1:FEM} can be uniformly bounded with respect to $h$ if $\phi$ is sufficiently smooth and $u_h$ and $\tilde{u}_h$ satisfy certain $L^{\infty}$-type stability conditions.
It is worth noting that, as shown in~\cite{KJ:1990}, piecewise linear finite element discretizations of semilinear elliptic problems exhibit $L^{\infty}$-stability under specific assumptions on the mesh $\cT_h$. For a particular instance of the nonlinear Poisson--Boltzmann equation, refer to~\cite{CHX:2007}.
\end{remark}

We conclude this section by mentioning that, by invoking~\eqref{separable}, one can deduce that~\eqref{model_FEM} is equivalent to find $u \in S_h (\Omega)$ such that
\begin{equation}
\label{model_FEM_weak}
    a(u,v) + b_h (u, v) = 0, \quad v \in S_h (\Omega),
\end{equation}
where $b_h (\cdot, \cdot) \colon S_h (\Omega) \times S_h (\Omega) \rightarrow \mathbb{R}$ is given by
\begin{equation*}
    b_h (u, v) = \intO I_h \left( f (\cdot, u) v \right) \,dx,
    \quad u, v \in S_h (\Omega).
\end{equation*}

% Section: Additive Schwarz method
\section{Additive Schwarz method}
\label{Sec:ASM}
In this section, we provide a brief summary of the abstract framework of additive Schwarz methods for convex optimization, which was originally proposed in~\cite{Park:2020}, applied to the finite element discretization~\eqref{model_FEM}.
Subsequently, we present a convergence theorem for additive Schwarz methods for~\eqref{model_FEM}, providing a sharper result than the one presented in~\cite{Park:2020}.

We assume that the solution space $V = S_h (\Omega)$ of~\eqref{model_FEM} admits a space decomposition of the form
\begin{equation}
\label{space_decomp}
V = \sum_{k=1}^N R_k^* V_k,
\end{equation}
where $V_k$, $1 \leq k \leq N$, is a finite-dimensional space and $R_k^* \colon V_k \rightarrow V$ is an injective linear operator.
The abstract additive Schwarz method for solving~\eqref{model_FEM} under the space decomposition~\eqref{space_decomp} is described in \cref{Alg:ASM}.
This algorithm has been previously studied in some existing works, e.g.,~\cite{Park:2020,Park:2021,TX:2002}.

% Algorithm: ASM
\begin{algorithm}
\caption{Additive Schwarz method for~\cref{model_FEM}}
\begin{algorithmic}[]
\label{Alg:ASM}
\STATE Let $u^{(0)} \in V$ and $\tau \in (0, \tau_0]$.
\FOR{$n=0,1,2,\dots$}
\item \vspace{-0.5cm} \begin{equation*}
\begin{split}
w_k^{(n+1)} &= \operatornamewithlimits{\arg\min}_{w_k \in V_k} E_h (u^{(n)} + R_k^* w_k ), \quad 1 \leq k \leq N \\
u^{(n+1)} &= u^{(n)} + \tau \sum_{k=1}^N R_k^* w_k^{(n+1)}
\end{split}
\end{equation*} \vspace{-0.4cm}
\ENDFOR
\end{algorithmic}
\end{algorithm}

\Cref{Alg:ASM} consists of local problems of the form
\begin{equation}
\label{local}
\min_{w_k \in V_k} E_h (v + R_k^* w_k),
\end{equation}
where $v \in V$ and $1 \leq k \leq N$.
It is clear that, under the condition~\eqref{semilinear_convex},~\eqref{local} is equivalent to find $w_k \in V_k$ such that
\begin{equation*}
a_k (w_k, v_k) + b_k (v; w_k, v_k) = - a(v, R_k^* v_k), \quad v_k \in V_k,
\end{equation*}
where $a_k(\cdot, \cdot, \cdot) \colon V_k \times V_k \rightarrow \mathbb{R}$ and  $b_k(\cdot; \cdot, \cdot) \colon V \times V_k \times V_k \rightarrow \mathbb{R}$ are given by
\begin{equation*}
\begin{array}{l}
a_k (w_k, v_k) = a(R_k^* w_k, R_k^* v_k), \\
b_k (v; w_k, v_k) = b_h (v + R_k^* w_k, R_k^* v_k),
\end{array}
\quad 
v \in V, \text{ } v_k, w_k \in V_k.
\end{equation*}
That is, each local problem of \cref{Alg:ASM} has the same form as the full problem~\eqref{model_FEM_weak}.

The maximum step size $\tau_0$ in \cref{Alg:ASM} will be described in \cref{Ass:convex}.
Alternatively, the step size $\tau$ can be determined by a full backtracking scheme proposed in~\cite{Park:2022a}, which leads to faster convergence.
For simplicity, we deal with the case of constant $\tau$; extending to variable step sizes is straightforward.

Here, we summarize the convergence theory of \cref{Alg:ASM} introduced in~\cite{Park:2020}.
We suppose that the energy functional $E_h$ in~\eqref{model_FEM} is decomposed as
\begin{equation}
    \label{composite}
    E_h (u) = F_h (u) + G_h (u),
\end{equation}
where $F_h \colon V \rightarrow \mathbb{R}$ is a G\^{a}teaux differentiable convex functional and $G_h \colon V \rightarrow \mathbb{R}$ is a convex functional.
Although the trivial decomposition $F_h = E_h$ and $G_h = 0$ is possible, we will use a different decomposition~(see~\eqref{FG}) in our analysis to obtain a convergence rate that is independent of the $\phi$-term in $E_h$.

The stable decomposition assumption, which was introduced in~\cite[Assumption~4.1]{Park:2020}, associated with the space decomposition~\eqref{space_decomp} and the energy decomposition~\eqref{composite} is stated in \cref{Ass:stable} in a form suitable for our purpose.

% Assumption: Stable decomposition
\begin{assumption}[stable decomposition]
    \label{Ass:stable}
    There exists a positive constant $C_0$ such that the following holds: for any $u,v \in V$, there exist $w_k \in V_k$, $1 \leq k \leq N$, such that
    \begin{subequations}
    \begin{align}
    \label{Ass1:stable}
    u-v &= \sum_{k=1}^N R_k^* w_k, \\
    \label{Ass2:stable}
    \sum_{k=1}^N D_{F_h} (v + R_k^* w_k, v) &\leq \frac{C_0^2}{2} | u - v |_{H^1 (\Omega)}^2,
    \end{align}
    and
    \begin{equation}
        \label{Ass3:stable}
    \sum_{k=1}^N G_h (v+ R_k^* w_k) \leq G_h (u) + (N-1) G_h (v).
    \end{equation}
    \end{subequations}
\end{assumption}

The strengthened convexity assumption~\cite[Assumption~4.2]{Park:2020}, which directly generalizes the strengthened Cauchy--Schwarz inequality for linear problems~\cite[Assumption~2.2]{TW:2005}, is presented in the following.

% Assumption: Strengthened convexity
\begin{assumption}[strengthened convexity]
    \label{Ass:convex}
    There exists a constant $\tau_0 \in (0, 1]$ such that, for any $v \in V$, $w_k \in V_k$, $1 \leq k \leq N$, and $\tau \in (0, \tau_0]$, we have
    \begin{equation*}
        (1 - \tau N) E_h(v) + \tau \sum_{k=1}^N E_h (v + R_k^* w_k) \geq E_h \left( v + \tau \sum_{k=1}^N R_k^* w_k \right).
    \end{equation*}
\end{assumption}

We note that the abstract convergence theory presented in~\cite{Park:2020} has two other assumptions: local stability~\cite[Assumption~4.3]{Park:2020} and sharpness of the energy functional~\cite[Assumption~3.4]{Park:2020}.
On the one hand, the local stability assumption is trivially satisfied in our case because \cref{Alg:ASM} uses exact local solvers~\eqref{local}.
On the other hand, it is readily verified that $E_h$ is 1-strongly convex~\cite{Nesterov:2018} with respect to the seminorm $| \cdot |_{H^1 (\Omega)}$~(which is indeed a norm by the Poincar\'{e}--Friedrichs inequality~\cite{BS:2008}):
\begin{equation}
\label{E_strong}
E_h (tu + (1-t)v) \leq t E(u) + (1-t) E(v) - \frac{t(1-t)}{2} |u - v |_{H^1 (\Omega)}^2, \quad
t \in [0, 1], \text{ }
u , v \in V.
\end{equation}
From~\eqref{E_strong}, we can deduce the following inequality without major difficulty~(cf.~\cite[Proposition~3.5]{Park:2020}):
\begin{equation}
\label{E_sharp}
    E_h(u) - E_h(u_h) \geq \frac{1}{2} | u - u_h |_{H^1 (\Omega)}^2, \quad u \in V,
\end{equation}
which implies that the sharpness assumption holds.
Hence, in our case, it suffices to consider \cref{Ass:stable,Ass:convex} only.

Under \cref{Ass:stable,Ass:convex}, the application of~\cite[Theorem~4.8]{Park:2020} to \cref{Alg:ASM} deduces that \cref{Alg:ASM} exhibits linear convergence,
in which the convergence rate depends on $C_0$ and $\tau_0$ only; see \cref{Rem:Park:2020}.
Meanwhile, the convergence rate can be further improved by incorporating the strong convexity of $E_h$ into the analysis.
Note that the 1-strong convexity of $E_h$ implies $\mu$-strong convexity of $F_h$ for some $\mu \in [0, 1]$.
The improved convergence result of \cref{Alg:ASM} is summarized in \cref{Thm:conv}; a proof will be provided in \cref{App:conv}.

% Theorem: Convergence theorem
\begin{theorem}
    \label{Thm:conv}
    Suppose that \cref{Ass:stable,Ass:convex} hold.
    In \cref{Alg:ASM}, if $\tau \in (0, \tau_0]$, then we have
    \begin{equation*}
        \frac{E_h (u^{(n+1)}) - E_h (u_h)}{E_h (u^{(n)}) - E_h (u_h)} \leq 1 - \tau \min \left\{ 1, \frac{1}{C_0^2 + 1 - \mu} \right\},
        \quad n \geq 0,
    \end{equation*}
where $\mu \in [0, 1]$ is the strong convexity parameter of $F_h$ with respect to the seminorm $| \cdot |_{H^1 (\Omega)}$.
\end{theorem}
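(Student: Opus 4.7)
The plan is to establish a one-step contraction $e_{n+1} \leq \rho\, e_n$ with $\rho = 1 - \tau \min\{1,\, 1/(C_0^2 + 1 - \mu)\}$, where $e_n = E_h(u^{(n)}) - E_h(u_h)$. The strategy follows the abstract Schwarz framework of~\cite[Theorem~4.8]{Park:2020}, refined by exploiting the $\mu$-strong convexity of $F_h$ to shift the effective decomposition constant from $C_0^2$ to $C_0^2 - \mu$.

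I apply \cref{Ass:convex} with $v = u^{(n)}$ and $w_k = w_k^{(n+1)}$ to obtain
\begin{equation*}
E_h(u^{(n+1)}) \leq (1 - \tau N) E_h(u^{(n)}) + \tau \sum_{k=1}^N E_h(u^{(n)} + R_k^* w_k^{(n+1)}).
\end{equation*}
By the exact local minimization in \cref{Alg:ASM}, $E_h(u^{(n)} + R_k^* w_k^{(n+1)}) \leq E_h(u^{(n)} + R_k^* \tilde w_k)$ for any $\tilde w_k \in V_k$; I choose $\{\tilde w_k\}$ to be the stable decomposition of $(u_h, u^{(n)})$ from \cref{Ass:stable}, so $\sum_k R_k^* \tilde w_k = u_h - u^{(n)}$. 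Splitting $E_h = F_h + G_h$ via~\eqref{composite} and using the Bregman identity gives $\sum_k F_h(u^{(n)} + R_k^* \tilde w_k) = N F_h(u^{(n)}) + \langle F_h'(u^{(n)}), u_h - u^{(n)} \rangle + \sum_k D_{F_h}(u^{(n)} + R_k^* \tilde w_k, u^{(n)})$. Bounding the Bregman sum via~\eqref{Ass2:stable} and the gradient pairing via the $\mu$-strong convexity of $F_h$, namely $\langle F_h'(u^{(n)}), u_h - u^{(n)} \rangle \leq F_h(u_h) - F_h(u^{(n)}) - (\mu/2)|u_h - u^{(n)}|_{H^1(\Omega)}^2$, together with~\eqref{Ass3:stable} for the $G_h$ sum and subtracting $E_h(u_h)$ from both sides, I arrive at the one-step inequality
\begin{equation*}
e_{n+1} \leq (1 - \tau) e_n + \frac{\tau (C_0^2 - \mu)}{2}\, |u_h - u^{(n)}|_{H^1(\Omega)}^2.
\end{equation*}

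I then combine this with the sharpness $|u_h - u^{(n)}|_{H^1(\Omega)}^2 \leq 2 e_n$ from~\eqref{E_sharp}. When $C_0^2 \leq \mu$ the quadratic coefficient is non-positive, so the inequality reduces to $e_{n+1} \leq (1-\tau) e_n$, which matches the $\min\{1,\cdot\} = 1$ branch. When $C_0^2 > \mu$, the sharper rate $1 - \tau/(C_0^2+1-\mu)$ is obtained by replaying the derivation with \cref{Ass:stable} applied instead to the scaled pair $(u^{(n)} + \alpha(u_h - u^{(n)}), u^{(n)})$ for some $\alpha \in (0, 1]$ and then optimizing $\alpha$; the strong-convexity step effectively shifts $C_0^2$ to $C_0^2 - \mu$ relative to the $\mu = 0$ argument of~\cite[Theorem~4.8]{Park:2020}. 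The hard part is precisely this final optimization: substituting the sharpness directly into the displayed one-step inequality only yields $1 - \tau(1 + \mu - C_0^2)$, useful solely for $C_0^2 < 1 + \mu$, so extracting the ``$+1$'' in the denominator for general $C_0^2$ requires the careful balancing developed in~\cite[Theorem~4.8]{Park:2020}.
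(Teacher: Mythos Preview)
Your direct route---apply \cref{Ass:convex} with the exact local minima, swap in the stable decomposition from \cref{Ass:stable} (possibly for the scaled pair $(u^{(n)}+\alpha(u_h-u^{(n)}),u^{(n)})$), expand $E_h=F_h+G_h$ via the Bregman identity, and use the $\mu$-strong convexity of $F_h$---is correct and is essentially what the paper does; the paper merely packages the first steps through the $M_\tau$-functional (\cref{Lem:ASM,Lem:M_bound}) to reach the same intermediate inequality
\begin{equation*}
E_h(u^{(n+1)}) \leq (1-\tau)E_h(u^{(n)}) + \tau E_h\bigl(\alpha u_h + (1-\alpha)u^{(n)}\bigr) + \tfrac{\tau(C_0^2-\mu)\alpha^2}{2}\,|u_h-u^{(n)}|_{H^1(\Omega)}^2,\quad \alpha\in[0,1].
\end{equation*}

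The gap is in your last step. You attribute the ``$+1$'' in the denominator to the balancing of~\cite[Theorem~4.8]{Park:2020}, but that argument uses only the sharpness bound~\eqref{E_sharp} and, as the paper notes in \cref{Rem:Park:2020}, yields the strictly weaker rate $1-\tfrac{\tau}{2}\min\{1,\tfrac{1}{2C_0^2}\}$; optimizing $\alpha$ with sharpness alone gives at best $1-\tau/(4(C_0^2-\mu))$ for large $C_0^2$, not the claimed bound. What actually produces the ``$+1$'' is the $1$-strong convexity of $E_h$ itself,~\eqref{E_strong}: applying it to the middle term above contributes an extra $-\tfrac{\alpha(1-\alpha)}{2}|u_h-u^{(n)}|_{H^1(\Omega)}^2$, which combines with the $(C_0^2-\mu)\alpha^2/2$ term to give $[(C_0^2+1-\mu)\alpha^2-\alpha]/2$. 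Then $\alpha=1/(C_0^2+1-\mu)$ (when $C_0^2\geq\mu$) annihilates the remainder and yields the stated rate, while $\alpha=1$ (when $C_0^2<\mu$) gives $e_{n+1}\leq(1-\tau)e_n$. So you need both strong-convexity ingredients,~\eqref{E_strong} for $E_h$ and~\eqref{F_strong} for $F_h$, not sharpness plus the older balancing.
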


Thanks to \cref{Thm:conv}, it suffices to estimate $C_0$ and $\tau_0$ in \cref{Ass:stable,Ass:convex}, respectively, as well as $\mu$ in order for a complete analysis for the convergence rate of \cref{Alg:ASM}.
In overlapping domain decomposition methods, the constant $\tau_0$ is determined by the usual coloring technique~(see, e.g.,~\cite[Section~5.1]{Park:2020}), so that it is independent of domain decomposition.
In \cref{Sec:DD}, we will estimate the constant $C_0$ in \cref{Ass:stable} corresponding to one- and two-level overlapping domain decomposition settings.
Furthermore, we will use a particular decomposition of the form~\eqref{composite} such that $\mu = 1$.

% Section: Overlapping domain decomposition methods
\section{Overlapping domain decomposition methods}
\label{Sec:DD}
In this section, we present convergence analyses of one- and two-level additive Schwarz methods for the finite element discretization~\eqref{model_FEM} based on an overlapping domain decomposition setting.
A special feature of the convergence analysis presented in this section is that the convergence rates of the methods are independent of the $\phi$-term in the energy functional $E_h$.
That is, the convergence rate does not deteriorate by the nonlinearity of the semilinear problem~\eqref{model_FEM_weak}.

% Subsection: One-level method
\subsection{One-level method}
We assume that the domain $\Omega$ admits a coarse triangulation $\cT_H$ such that $\cT_h$ is a refinement of $\cT_H$, where $H$ stands for the characteristic element diameter of $\cT_H$.
A finite element space $S_H (\Omega)$ on the coarse triangulation $\cT_H$ is defined in the same manner as $S_h (\Omega)$.
Let $\{ \Omega_k \}_{k=1}^N$ be a nonoverlapping domain decomposition of $\Omega$ such that each $\Omega_k$ is the union of several coarse elements in $\cT_H$.
The number of coarse elements consisting each $\Omega_k$, $1 \leq k \leq N$, is assumed to be uniformly bounded.
Each subdomain $\Omega_k$ is enlarged by a layer of fine elements with width $\delta$ to form an extended subdomain $\Omega_k'$.
Then $\{ \Omega_k' \}_{k=1}^N$ forms an overlapping domain decomposition of $\Omega$.
In each $\Omega_k'$, we define $S_h (\Omega_k')$ as the piecewise linear finite element space on $\cT_h |_{\Omega_k'}$ with the homogeneous Dirichlet boundary condition.
As in~\cite[equation~(3.7)]{TW:2005}, we construct a piecewise linear partition of unity $\{ \theta_k \}_{k=1}^N$ for $\Omega$ subordinate to the covering $\{ \Omega_k' \}_{k=1}^N$ that satisfies
\begin{subequations}
\label{pou}
\begin{align}
    \label{pou1}
    \theta_k = 0 \quad \text{ in } \Omega \setminus \Omega_k', \\
    \label{pou2}
    \sum_{k=1}^N \theta_k = 1 \text{ in } \overline{\Omega}, \\
    \label{pou3}
    | \theta_k |_{W^{1,\infty} (\Omega_k')} \lesssim \frac{1}{\delta}, \quad 1 \leq k \leq N,
\end{align}
\end{subequations}
where the notation $A \lesssim B$ means that there exists a positive constant $C$, which is independent of $h$, $H$, and $\delta$, such that $A \leq C B$.
In what follows, we also write $A \approx B$ if $A \lesssim B$ and $B \lesssim A$.

In~\eqref{space_decomp}, we set
\begin{equation}
\label{1L}
V_k = S_h (\Omega_k'), \quad 1 \leq k \leq N,
\end{equation}
and $R_k^* \colon V_k \rightarrow V$ as the natural extension-by-zero operator from $S_h (\Omega_k')$ to $S_h (\Omega)$.
Then \cref{Alg:ASM} becomes the one-level additive Schwarz method for~\eqref{model_FEM}.
Using the coloring technique, one can readily verify that \cref{Ass:convex} is satisfied with $\tau_0 = 1/N_c$ under the setting~\eqref{1L}, where $N_c \leq 4$ if $d = 2$ and $N_c \leq 8$ if $d = 3$~\cite{Park:2020,TX:2002}.
Hence, it is enough to prove \cref{Ass:stable} in order to complete the convergence analysis of the one-level method.

To verify \cref{Ass:stable}, we first have to determine the functionals $F_h$ and $G_h$ in~\eqref{composite}.
In existing analyses of additive Schwarz methods~\cite{Park:2020,Park:2023}, we usually set $F_h$ and $G_h$ as the smooth and nonsmooth parts of the entire energy functional $E_h$, respectively.
From this perspective, it might seem natural to set $F_h = E_h$ and $G_h = 0$ as $E_h$ is smooth.
However, we adopt a different strategy here; we define $F_h$ and $G_h$ as the linear and nonlinear parts of $E_h$ respectively, as follows:
\begin{equation}
    \label{FG}
    F_h(u) = \int_{\Omega} \frac{1}{2} |\nabla u|^2  \,dx , \quad
    G_h (u) = \int_{\Omega} I_h \left( \phi (\cdot, u) \right) \,dx.
\end{equation}
It is clear that $F_h$ is 1-strongly convex with respect to the seminorm $| \cdot |_{H^1 (\Omega)}$, i.e., we have $\mu = 1$.
The Bregman distance~\eqref{Bregman} of $F_h$ is given by
\begin{equation}
    \label{F_distance}
    D_{F_h} (u,v) = \intO \frac{1}{2} | \nabla (u-v) |^2 \,dx, \quad u,v \in V.
\end{equation}
Moreover,~\eqref{separable} implies that $G_h$ is expressed as
\begin{equation}
    \label{G_pointwise}
    G_h (u) = \sum_{x \in \cN_h} \phi_h^x (u(x)).
\end{equation}

The following theorem states that the one-level additive Schwarz method for~\eqref{model_FEM} satisfies \cref{Ass:stable} in a way that the constant $C_0$ is independent of the $\phi$-term in $E_h$.

% Theorem: One-level
\begin{theorem}
    \label{Thm:1L}
    In the one-level additive Schwarz method described in~\eqref{space_decomp} and~\eqref{1L}, \cref{Ass:stable} holds with
    \begin{equation*}
        C_0^2 \approx \frac{1}{H \delta}.
    \end{equation*}
\end{theorem}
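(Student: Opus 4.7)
The plan is to produce an explicit stable decomposition by partition-of-unity splitting. Given $u, v \in V$, set $w := u - v$ and define
\[
w_k := I_h(\theta_k w) \in V_k = S_h(\Omega_k'), \qquad 1 \leq k \leq N.
\]
Property \eqref{Ass1:stable} is immediate: since $\sum_k \theta_k \equiv 1$ on $\overline\Omega$ by \eqref{pou2} and $w \in S_h(\Omega)$, linearity of the nodal interpolant on nodal values gives $\sum_k R_k^* w_k = I_h(w) = w$.

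For the nonsmooth inequality \eqref{Ass3:stable}, the decisive tool is the pointwise separability \eqref{G_pointwise} combined with convexity of each $\phi_h^x$. At any node $x \in \cN_h$, since $w \in S_h(\Omega)$, we have $I_h(\theta_k w)(x) = \theta_k(x)\, w(x)$, so
\[
(v + R_k^* w_k)(x) = (1-\theta_k(x))\, v(x) + \theta_k(x)\, u(x),
\]
which is a genuine convex combination because $\theta_k(x) \in [0,1]$ (with the convention $\theta_k(x) = 0$ when $x \notin \Omega_k'$, consistent with \eqref{pou1}). Convexity of $\phi_h^x$ then gives
\[
\phi_h^x\bigl((v + R_k^* w_k)(x)\bigr) \leq (1-\theta_k(x))\, \phi_h^x(v(x)) + \theta_k(x)\, \phi_h^x(u(x)).
\]
Summing over $k$ and using $\sum_k \theta_k(x) = 1$ yields, pointwise, $\sum_k \phi_h^x(\cdot) \leq \phi_h^x(u(x)) + (N-1)\phi_h^x(v(x))$; summing over $x \in \cN_h$ produces \eqref{Ass3:stable}. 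Remarkably, the specific shape of $\phi$ never enters this argument; this is the step that makes the whole result independent of the nonlinear term.

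It remains to establish \eqref{Ass2:stable}, which via \eqref{F_distance} reduces to the classical one-level overlapping Schwarz estimate
\[
\sum_{k=1}^N |I_h(\theta_k w)|_{H^1(\Omega)}^2 \lesssim \frac{1}{H\delta}\, |w|_{H^1(\Omega)}^2.
\]
I would follow the standard route: a discrete product rule together with an inverse inequality bounding the interpolation error yields $|I_h(\theta_k w)|_{H^1(\Omega_k')}^2 \lesssim \|\nabla\theta_k\|_{L^\infty}^2 \|w\|_{L^2(\Omega_{k,\delta})}^2 + |w|_{H^1(\Omega_k')}^2$, where $\Omega_{k,\delta}$ is the strip on which $\nabla\theta_k$ is supported; the partition-of-unity bound \eqref{pou3} replaces $\|\nabla\theta_k\|_{L^\infty}$ by $1/\delta$; a Dryja--Widlund-type overlap estimate on $\Omega_{k,\delta}$ produces the additional factor $\delta/H$ needed to convert $1/\delta^2$ into $1/(H\delta)$; and finally summation over $k$ is absorbed by the finite-coloring property of $\{\Omega_k'\}_{k=1}^N$. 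I expect the principal technical obstacle to lie precisely in this last step, namely tracking the geometric constants carefully enough to obtain the sharp scaling $1/(H\delta)$ rather than the naive $1/\delta^2$ bound; the extra factor $1/H$ ultimately originates from a global Poincaré--Friedrichs inequality on $\Omega$ applied across the overlap strips.
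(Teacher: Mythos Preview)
Your proposal is correct and follows essentially the same route as the paper: the decomposition $w_k = I_h(\theta_k w)$, the verification of \eqref{Ass3:stable} via nodal separability \eqref{G_pointwise} and convexity of $\phi_h^x$, and the reduction of \eqref{Ass2:stable} to the standard one-level overlapping Schwarz estimate all match the paper's argument. One small clarification: the factor $1/H$ does not arise from the global Poincar\'{e}--Friedrichs inequality but from the Dryja--Widlund strip estimate itself, which yields $\delta^{-2}\|w\|_{L^2(\Omega_{k,\delta})}^2 \lesssim (1+H/\delta)|w|_{H^1(\Omega_k')}^2 + (H\delta)^{-1}\|w\|_{L^2(\Omega_k')}^2$; Poincar\'{e}--Friedrichs is only used afterward (with an $O(1)$ constant) to bound the summed $\|w\|_{L^2(\Omega)}^2$ by $|w|_{H^1(\Omega)}^2$.
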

\begin{proof}
    Take any $u,v \in V$ and let $w = u - v$.
    We define $w_k \in V_k$, $1 \leq k \leq N$, as
    \begin{equation*}
        w_k = I_h (\theta_k w ),
    \end{equation*}
    where $\{ \theta_k \}_{k=1}^N$ is the partition of unity given in~\eqref{pou}.
    We readily observe that~\eqref{Ass1:stable} holds.
    Since $0 \leq \theta_k (x) \leq 1$ for each $k$ and $x \in \cN_h$, invoking~\eqref{G_pointwise} and the convexity of $\phi_h^x$ yields
    \begin{equation*} \begin{split}
        \sum_{k=1}^N G_h (v + R_k^* w_k)
        &= \sum_{x \in \cN_h} \sum_{k=1}^N \phi_h^x \left( v(x) + \theta_k (x) w(x) \right) \\
        &\leq \sum_{x \in \cN_h} \sum_{k=1}^N \left[ \theta_k (x) \phi_h^x \left( v(x) + w(x) \right) + (1 - \theta_k (x) ) \phi_h^x \left( v(x) \right) \right] \\
        &= G_h (u) + (N-1) G_h (v),
    \end{split} \end{equation*}
    which verifies~\eqref{Ass3:stable}.
    It is worth mentioning that a similar argument was presented in~\cite[Proposition~5.1]{Badea:2010}.
    
    While estimating $C_0^2$ in~\eqref{Ass2:stable} can be done by a standard argument of overlapping Schwarz methods~(see, e.g.,~\cite{Park:2023,TW:2005}), we provide a detailed derivation for the sake of completeness.
    By~\eqref{F_distance} and the $H^1$-stability of the nodal interpolation operator $I_h$ presented in~\cite[Lemma~3.9]{TW:2005}, we have
    \begin{equation}
        \label{Thm1:1L}
        D_{F_h}(v + R_k^* w_k, v) 
        \lesssim \intO |\nabla (R_k^* w_k)|^2 \,dx
        = |I_h (\theta_k w)|_{H^1 (\Omega)}^2
        \lesssim |\theta_k w|_{H^1 (\Omega)}^2.
    \end{equation}
    Invoking~\eqref{pou} yields
    \begin{equation}
        \label{Thm2:1L}
        | \theta_k w |_{H^1 (\Omega)}^2
        \lesssim \intO | \theta_k \nabla w |^2 \,dx + \intO |w \nabla \theta_k |^2 \,dx
        \lesssim | w |_{H^1 (\Omega)}^2 + \frac{1}{ \delta^2} \int_{\Omega_{k, \delta}} |w|^2 \,dx,
    \end{equation}
    where $\Omega_{k, \delta}$ is the support of $\nabla \theta_k$.
    Since $\Omega_{k, \delta}$ is a strip of width $\approx \delta$ along the boundary $\partial \Omega_k'$ of the subdomain $\Omega_k'$, we can use the trace theorem-type argument introduced in~\cite[Lemma~3.10]{TW:2005}~(see also~\cite{DW:1994}) to derive the following:
    \begin{equation}
        \label{Thm3:1L}
        \frac{1}{\delta^2} \int_{\Omega_{k, \delta}} |w|^2 \,dx
        \lesssim \left( 1 + \frac{H}{\delta} \right) | w |_{H^1 (\Omega)}^2 + \frac{1}{H \delta} \| w \|_{L^2 (\Omega)}^2
        \lesssim \frac{1}{H \delta} | w |_{H^1 (\Omega)}^2,
    \end{equation}
    where the last inequality is due to the Poincar\'{e}--Friedrichs inequality~\cite{BS:2008}.
    By~\eqref{Thm1:1L}--\eqref{Thm3:1L}, we deduce that~\eqref{Ass2:stable} holds with $C_0^2 \approx 1/H\delta$. 
\end{proof}

Thanks to \cref{Thm:1L}, we guarantee that the convergence rate of the one-level additive Schwarz method for~\eqref{model_FEM} has a bound independent of the nonlinear term in~\eqref{model_FEM}.
That is, the convergence rate remains stable even if nonlinearity of the problem is very strong, e.g., exponential nonlinearity in the nonlinear Poisson--Boltzmann equation~\cite{CHX:2007}.
Meanwhile, since $C_0^2$ in \cref{Thm:1L} depends on $1/H \delta$, the method is not scalable in the sense that increasing the number of subdomains leads to a higher number of iterations required to attain a desired level of accuracy.
To achieve scalability, it becomes necessary to design a two level method, which utilizes a suitable coarse space that effectively corrects the low-frequency error of the solution~\cite{TW:2005}.

% Subsection: Two-level method
\subsection{Two-level method}
In the two-level method, we use a space decomposition
\begin{equation}
\label{space_decomp_2L}
V = R_0^* V_0 + \sum_{k=1}^N R_k^* V_k,
\end{equation}
where $V_0$ is a finite-dimensional space that plays a role of the coarse space, and $R_0^* \colon V_0 \rightarrow V$ is an injective linear operator.
The two-level additive Schwarz method based on~\eqref{space_decomp_2L} is the same as \cref{Alg:ASM} except that the index $k$ runs from $0$ to $N$, so that we do not present it separately.
In~\eqref{space_decomp_2L}, we set $V_0$ as
\begin{equation}
    \label{2L}
    V_0 = S_H (\Omega),
\end{equation}
i.e., the piecewise linear finite element space on the coarse triangulation $\cT_H$.
In addition, we set $V_k$, $1 \leq k \leq N$, as in~\eqref{1L}.
As $S_H (\Omega) \subset S_h (\Omega)$, we set $R_0^*$ as the natural embedding from $S_H (\Omega)$ to $S_h (\Omega)$.

Let $\{ x^i \}_{i \in \cI_H}$ be the collection of all interior vertices of $\cT_H$.
For each $i \in \cI_H$, we define a region $\omega_i \subset \Omega$ as the union of the coarse elements having $x^i$ as its vertex:
\begin{equation*}
    \overline{\omega}_i = \bigcup_{T \in \cT_H, x^i \in \partial T} \overline{T}.
\end{equation*}
Let $\phi^i$ be the coarse nodal basis function associated with the vertex $x^i$.
We recall that the following nonlinear coarse interpolation operator $J_H \colon S_h (\Omega) \rightarrow S_H (\Omega)$ was introduced in~\cite{Badea:2006,Tai:2005}:
\begin{equation}
    \label{J_H}
    J_H u = \sum_{i \in \cI_H} \left( \min_{\overline{\omega}_i} \max \{ u, 0 \} - \min_{\overline{\omega}_i} \max \{ -u, 0 \} \right)\phi^i.
\end{equation}
A key property of $J_H$ is that it is positivity-preserving, i.e., it satisfies
\begin{equation}
\label{positivity}
\begin{cases}
    0 \leq J_H w \leq w, &\quad \text{ if } w > 0, \\
    w \leq J_H w \leq 0, &\quad \text{ if } w < 0, \\
    J_H w = 0, &\quad \text{ if } w = 0.
\end{cases}
\end{equation}
The error and stability estimates of $J_H$ are summarized in \cref{Lem:J_H}.
We note that the proof of \cref{Lem:J_H} given in~\cite{Tai:2005}~(see also~\cite{Tai:2003}) relies on the discrete Sobolev inequality of piecewise linear finite element functions~\cite[Lemma~2.3]{BX:1991}.

% Lemma: J_H
\begin{lemma}
    \label{Lem:J_H}
    The operator $J_H \colon S_h (\Omega) \rightarrow S_H (\Omega)$ defined in~\eqref{J_H} satisfies
    \begin{equation*}
        \| u - J_H u \|_{L^2 (\Omega)} + H | J_H u |_{H^1 (\Omega)}
        \lesssim C_d (H, h) H | u |_{H^1 (\Omega)}, \quad u \in S_h (\Omega),
    \end{equation*}
    where $C_d (H,h)$ is given by
    \begin{equation}
    \label{C_d}
        C_d (H, h) = \begin{cases}
            \left( 1 + \log \dfrac{H}{h} \right)^{\frac{1}{2}}, & \quad \text{ if } d = 2, \\
            \left( \dfrac{H}{h} \right)^{\frac{1}{2}}, & \quad \text{ if } d = 3.
        \end{cases}
    \end{equation}
\end{lemma}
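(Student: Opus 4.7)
The plan is to follow the strategy of~\cite{Tai:2005}: decompose $u$ into positive and negative parts so that the nonlinear min-interpolation decouples, localize to coarse element patches, and close with the discrete Sobolev inequality of~\cite{BX:1991}.

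First I would write $u = u^+ - u^-$ with $u^\pm := \max\{\pm u, 0\}$. Since $\max\{u,0\} = u^+$ and $\max\{-u,0\} = u^-$, the definition~\eqref{J_H} reads $J_H u = J_H^+ u^+ - J_H^+ u^-$, where the auxiliary operator
\begin{equation*}
J_H^+ w := \sum_{i \in \cI_H} \left( \min_{\overline{\omega}_i} w \right) \phi^i
\end{equation*}
acts on nonnegative inputs only. Since $|u^+|_{H^1(\Omega)}^2 + |u^-|_{H^1(\Omega)}^2 = |u|_{H^1(\Omega)}^2$, by the triangle inequality it suffices to prove the claimed estimate with $J_H$ replaced by $J_H^+$ and $u$ replaced by a nonnegative $w \in \{u^+, u^-\}$.

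Next, I would localize to a single coarse element $T \in \cT_H$ with vertex index set $\cI_T \subset \cI_H$, and set $\omega_T := \bigcup_{i \in \cI_T} \omega_i$. On $T$ we have $J_H^+ w = \sum_{i \in \cI_T} m_i \phi^i$ with $m_i := \min_{\overline{\omega}_i} w$. Using $|\nabla \phi^i| \lesssim H^{-1}$, $|T| \approx H^d$, and the nonnegativity $w - m_i \geq 0$ on $T$, elementwise bounds of the form
\begin{equation*}
|J_H^+ w|_{H^1(T)}^2 \lesssim H^{d-2} \max_{i,j \in \cI_T} |m_i - m_j|^2, \qquad \|w - J_H^+ w\|_{L^2(T)}^2 \lesssim H^d \left( \mathrm{osc}_{\omega_T} w \right)^2
\end{equation*}
follow at once. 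Since the map $t \mapsto t^\pm$ is $1$-Lipschitz, both $|m_i - m_j|$ and $\mathrm{osc}_{\omega_T}(w)$ are dominated by $\mathrm{osc}_{\omega_T}(u)$.

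The last step is to bound $\mathrm{osc}_{\omega_T}(u)$ on the patch of diameter $\approx H$. Applying the Bramble--Xu discrete Sobolev inequality~\cite[Lemma~2.3]{BX:1991} to $u - \overline{u}_{\omega_T} \in S_h$, followed by Poincar\'{e}--Friedrichs, then summing over $T \in \cT_H$ using the finite overlap of $\{\omega_T\}$ and matching the $H$-powers in the elementwise prefactors against the discrete Sobolev constant produces the claimed $C_d(H,h)$ in~\eqref{C_d}. The main obstacle is precisely this last bookkeeping: since $H^1 \not\hookrightarrow L^\infty$, the sharp discrete Sobolev estimate for $S_h$ takes structurally different forms in $d=2$ (a logarithmic Bramble--Xu bound) and in $d=3$ (where one must combine the sub-critical embedding $H^1 \hookrightarrow L^6$ with a local inverse inequality on fine elements), so the weighted combinations that produce $(1 + \log(H/h))^{1/2}$ versus $(H/h)^{1/2}$ need to be traced carefully in each dimension.
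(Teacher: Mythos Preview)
The paper does not actually prove \cref{Lem:J_H}; it merely cites~\cite{Tai:2005} (and~\cite{Tai:2003}) and remarks that the argument hinges on the discrete Sobolev inequality~\cite[Lemma~2.3]{BX:1991}. Your proposal reconstructs precisely that route---splitting into positive/negative parts so that the nonlinear min-interpolation decouples, localizing to coarse patches, controlling everything by $\mathrm{osc}_{\omega_T}(u)$, and closing with the Bramble--Xu discrete Sobolev bound---so it is aligned with what the paper points to and is essentially correct as a sketch.
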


Using \cref{Lem:J_H}, we are able to prove that the two-level additive Schwarz method for~\eqref{model_FEM} satisfies \cref{Ass:stable} as follows.

% Theorem: Two-level
\begin{theorem}
    \label{Thm:2L}
    In the two-level additive Schwarz method described in~\eqref{1L},~\eqref{space_decomp_2L}, and~\eqref{2L}, \cref{Ass:stable} holds with
    \begin{equation*}
        C_0^2 \approx C_d (H, h)^2 \left( 1 + \frac{H}{\delta} \right),
    \end{equation*}
    where $C_d$ was given in~\eqref{C_d}.
\end{theorem}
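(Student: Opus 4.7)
The plan is to construct a stable decomposition of $w = u - v$ using the nonlinear coarse interpolation $J_H$ from~\eqref{J_H} together with the partition of unity $\{\theta_k\}_{k=1}^N$ from~\eqref{pou}. I set $w_0 = J_H w \in V_0$, let $\tilde{w} = w - R_0^* w_0$, and define $w_k = I_h(\theta_k \tilde{w}) \in V_k$ for $1 \leq k \leq N$. Since $\sum_{k=1}^N \theta_k \equiv 1$ on $\overline{\Omega}$ and $I_h$ is the identity on $S_h(\Omega)$, the identity~\eqref{Ass1:stable} holds by construction. The remaining work is to verify~\eqref{Ass3:stable} and~\eqref{Ass2:stable}.

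For~\eqref{Ass3:stable}, I would first promote the positivity-preserving property~\eqref{positivity} to a pointwise statement: for every fine vertex $x \in \cN_h$, the value $J_H w(x)$ has the same sign as $w(x)$ and satisfies $|J_H w(x)| \leq |w(x)|$. This can be read off~\eqref{J_H} by noting that the coarse coefficient $c_i = \min_{\overline{\omega}_i}\max\{w,0\} - \min_{\overline{\omega}_i}\max\{-w,0\}$ vanishes whenever $w$ changes sign in $\overline{\omega}_i$, and otherwise satisfies $|c_i| \leq |w(x)|$ for every fine vertex $x \in \overline{\omega}_i$; since $\{\phi^i(x)\}$ is a nonnegative partition of unity, $J_H w(x)$ inherits these bounds. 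Consequently $\tilde{w}(x)$ and $w(x)$ share sign (or both vanish), with $|\tilde{w}(x)| \leq |w(x)|$, so each $R_k^* w_k(x)$ admits a representation $\lambda_k(x) w(x)$ with $\lambda_k(x) \in [0,1]$ and $\sum_{k=0}^N \lambda_k(x) = 1$. Applying convexity of $\phi_h^x$ nodewise and summing using~\eqref{G_pointwise} delivers~\eqref{Ass3:stable}, in direct analogy to the one-level argument in \cref{Thm:1L}.

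For~\eqref{Ass2:stable}, by~\eqref{F_distance} I need to control $|J_H w|_{H^1(\Omega)}^2 + \sum_{k=1}^N |I_h(\theta_k \tilde{w})|_{H^1(\Omega)}^2$. The coarse term is bounded by $C_d(H,h)^2 |w|_{H^1(\Omega)}^2$ immediately from \cref{Lem:J_H}. For the fine contribution, I repeat the chain~\eqref{Thm1:1L}--\eqref{Thm3:1L} from the proof of \cref{Thm:1L} with $w$ replaced by $\tilde{w}$, which gives
\begin{equation*}
\sum_{k=1}^N |I_h(\theta_k \tilde{w})|_{H^1(\Omega)}^2 \lesssim \left(1 + \frac{H}{\delta}\right) |\tilde{w}|_{H^1(\Omega)}^2 + \frac{1}{H\delta}\|\tilde{w}\|_{L^2(\Omega)}^2.
\end{equation*}
The crucial difference from the one-level case is that I no longer invoke Poincar\'e--Friedrichs on the $L^2$-term; instead I apply \cref{Lem:J_H} to get $\|\tilde{w}\|_{L^2(\Omega)} \lesssim C_d(H,h) H |w|_{H^1(\Omega)}$ and $|\tilde{w}|_{H^1(\Omega)} \leq |w|_{H^1(\Omega)} + |J_H w|_{H^1(\Omega)} \lesssim C_d(H,h)|w|_{H^1(\Omega)}$. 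Combining these estimates yields the claimed bound $C_0^2 \approx C_d(H,h)^2 (1 + H/\delta)$.

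The main obstacle is the pointwise positivity-preservation argument underpinning~\eqref{Ass3:stable}: the nonlinear term $G_h$ couples the coarse and fine corrections through the convexity argument, and without the pointwise sign/magnitude control of $J_H w$ versus $w$ the needed convex-combination representation of $v + R_k^* w_k$ would fail. The remaining ingredients are routine assemblies of \cref{Lem:J_H} with the overlapping-Schwarz techniques already used in \cref{Thm:1L}.
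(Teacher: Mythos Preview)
Your proposal is correct and follows essentially the same route as the paper: the identical decomposition $w_0 = J_H w$, $w_k = I_h(\theta_k(w - J_H w))$, the same pointwise positivity-preserving argument for the $G_h$-condition (the paper packages it as $J_H w = \theta_w w$ for a continuous $\theta_w \in [0,1]$, citing~\cite{BK:2012}, while you derive the equivalent nodal statement directly from~\eqref{J_H}), and the same combination of \cref{Lem:J_H} with the one-level trace estimates for the $D_{F_h}$-bound.
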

\begin{proof}
Take any $u, v \in V$ and let $w = u - v$.
We define $w_0 \in V_0$ and $w_k \in V_k$, $1 \leq k \leq N$, as
\begin{equation*}
    w_0 = J_H w, \quad
    w_k = I_h \left( \theta_k (w - R_0^* w_0) \right),
    \quad 1 \leq k \leq N,
\end{equation*}
where $\{ \theta_k \}_{k=1}^N$ is the partition of unity given in~\eqref{pou}.
As we use the space decomposition~\eqref{space_decomp_2L} consisting of $N+1$ subspaces, we have to verify the following:
\begin{subequations}
\begin{align}
\label{stable_2L_1}
w &= \sum_{k=0}^N R_k^* w_k, \\
\label{stable_2L_2}
\sum_{k=0}^N D_{F_h} (v + R_k^* w_k, v) &\leq \frac{C_0^2}{2} | w |_{H^1 (\Omega)}^2, \\
\label{stable_2L_3}
\sum_{k=0}^N G_h (v + R_k^* w_k) &\leq G_h (u) + N G_h (v).
\end{align}
\end{subequations}
Among the above equations,~\eqref{stable_2L_1} is easy to verify.
Now, we show that~\eqref{stable_2L_3} holds through a similar argument as in~\cite[Proposition~5.2]{Badea:2010}.
Since $J_H$ is positivity-preserving~(see~\eqref{positivity}), there exists a function $\theta_w \in C^0 (\overline{\Omega})$, such that $0 \leq \theta_w \leq 1$ and $J_H w = \theta_w w$~\cite[equation~(61)]{BK:2012}.
That is, we have
\begin{equation*}
    w_0 (x) = \theta_w (x) w(x), \quad
    w_k (x) = (1 - \theta_w (x)) \theta_k (x) w(x), \quad
    1 \leq k \leq N,
\end{equation*}
for each $x \in \cN_h$.
It follows by~\eqref{G_pointwise} and the convexity of $\phi_h^x$ that
\begin{equation} \begin{split}
    \label{Thm1:2L}
    G_h ( v + R_0^* w_0) &= \sum_{x \in \cN_h} \phi_h^x \left( v(x) + \theta_w (x) w(x) \right) \\
    &\leq \sum_{x \in \cN_h} \left[ \theta_w (x) \phi_h^x ( v(x) + w(x)) + (1 - \theta_w (x)) \phi_h^x (v(x)) \right]
\end{split} \end{equation}
and that
\begin{equation} \begin{split}
    \label{Thm2:2L}
    &\sum_{k=1}^N G_h (v + R_k^* w_k)
    = \sum_{x \in \cN_h} \sum_{k=1}^N \phi_h^x \left( v(x) + (1 - \theta_w (x))\theta_k (x) w(x) \right) \\
    &\leq \sum_{x \in \cN_h} \sum_{k=1}^N \left[ (1 - \theta_w (x)) \theta_k (x) \phi_h^x (v(x) + w(x) ) + \left( 1 - \theta_k (x) + \theta_w (x) \theta_k (x) \right) \phi_h^x (v(x)) \right] \\
    &= \sum_{x \in \cN_h} \left[ (1 - \theta_w (x)) \phi_h^x (v(x) + w(x)) + ( N-1 + \theta_w (x) ) \phi_h^x (v(x)) \right].
\end{split} \end{equation}
Summing~\eqref{Thm1:2L} and~\eqref{Thm2:2L} yields~\eqref{stable_2L_3} as follows:
\begin{equation*}
    \sum_{k=0}^N G_h (v+ R_k^* w_k)
    \leq \sum_{x \in \cN_h} \left[ \phi_h^x (v(x) + w(x)) + N \phi_h^x (v(x)) \right]
    = G_h (u) + N G_h (v).
\end{equation*}

Next, we prove~\eqref{stable_2L_2} by proceeding similarly to~\cite[Theorem~4.9]{Park:2023}.
By~\eqref{F_distance} and \cref{Lem:J_H}, we have
\begin{equation}
    \label{Thm3:2L}
    D_{F_h} (v + R_0^* w_0 , v)
    \lesssim  \intO | \nabla (R_0^* w_0 ) |^2 \,dx
    =  |J_H w|_{H^1 (\Omega)}^2
    \lesssim C_d (H,h)^2 |w|_{H^1 (\Omega)}^2.
\end{equation}
Using the same argument as in the proof of \cref{Thm:1L}, for each $1 \leq k \leq N$, we obtain
\begin{equation}
    \label{Thm4:2L}
    D_{F_h} (v + R_k^* w_k, v) \lesssim \left( \left( 1 + \frac{H}{\delta} \right) |w - J_H w|_{H^1 (\Omega)}^2 + \frac{1}{H \delta} \| w - J_H w \|_{L^2 (\Omega)}^2 \right).
\end{equation}
Meanwhile, \cref{Lem:J_H} implies that
\begin{equation}
    \label{Thm5:2L}
    | w - J_H w|_{H^1 (\Omega)}^2
    \lesssim |w|_{H^1 (\Omega)}^2 + |J_H w|_{H^1 (\Omega)}^2
    \lesssim C_d (H, h)^2 |w|_{H^1 (\Omega)}^2
\end{equation}
and that
\begin{equation}
    \label{Thm6:2L}
    \frac{1}{H \delta} \| w - J_H w \|_{L^2 (\Omega)}^2
    \lesssim C_d (H, h)^2 \frac{H}{\delta} |w |_{H^1 (\Omega)}^2.
\end{equation}
Combining~\eqref{Thm3:2L}--\eqref{Thm6:2L} yields~\eqref{stable_2L_2}.
\end{proof}

As in the one-level case, the constant $C_0$ for the two-level case analyzed in \cref{Thm:2L} is independent of the nonlinear term in $E_h$.
Moreover, \cref{Thm:2L} implies that the two-level method is scalable in the sense that the convergence rate depends on $H/h$ and $H/ \delta$.
Namely, in both cases of small overlap $\delta \approx h$ and generous overlap $\delta \approx H$, the convergence rate is uniformly bounded by a function of $H/h$.

We conclude this section by summarizing the linear convergence rates of the one- and two-level additive Schwarz methods for~\eqref{model_FEM} derived from \cref{Thm:1L,Thm:2L} in the following.

% Corollary: Convergence theorem
\begin{corollary}
    \label{Cor:conv}
    Suppose that \cref{Ass:stable,Ass:convex} hold.
    In \cref{Alg:ASM}, if $\tau \in (0, \tau_0]$, then we have
    \begin{equation}
    \label{Cor1:conv}
        \frac{E_h (u^{(n+1)}) - E_h (u_h)}{E_h (u^{(n)}) - E_h (u_h)} \leq 1 - \tau \min \left\{ 1, \frac{1}{ C_0^2} \right\},
        \quad n \geq 0,
    \end{equation}
    where
    \begin{equation*}
        C_0^2 \approx
        \begin{cases}
            \dfrac{1}{H \delta}, & \text{ for one-level}, \\
            C_d (H, h)^2 \left( 1 + \dfrac{H}{\delta} \right), & \text{ for two-level},
        \end{cases}
    \end{equation*}
    and $C_d (H,h)$ was given in~\eqref{C_d}.
\end{corollary}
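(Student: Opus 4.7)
The plan is to assemble the corollary directly from the three preceding results: the abstract convergence bound in \cref{Thm:conv}, the stable decomposition estimate in \cref{Thm:1L}, and the analogous two-level estimate in \cref{Thm:2L}. \Cref{Ass:convex} has already been verified for both settings by the standard coloring argument, so no additional work is needed on $\tau_0$. The only nontrivial observation is that with the specific decomposition~\eqref{FG} used throughout \cref{Sec:DD}, the smooth part $F_h(u) = \tfrac{1}{2}\intO |\nabla u|^2\,dx$ is $1$-strongly convex with respect to $|\cdot|_{H^1(\Omega)}$, so the strong convexity parameter of $F_h$ appearing in \cref{Thm:conv} is $\mu = 1$.

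First I would invoke \cref{Thm:conv}, which under \cref{Ass:stable,Ass:convex} gives, for any $\tau \in (0,\tau_0]$,
\begin{equation*}
\frac{E_h(u^{(n+1)}) - E_h(u_h)}{E_h(u^{(n)}) - E_h(u_h)}
\leq 1 - \tau \min\left\{ 1, \frac{1}{C_0^2 + 1 - \mu} \right\}.
\end{equation*}
Substituting $\mu = 1$ collapses the denominator to $C_0^2$, yielding exactly the bound~\eqref{Cor1:conv}. This is the key simplification that makes the estimate clean; it is a direct consequence of having separated $E_h$ into its quadratic and nonlinear pieces in~\eqref{FG} rather than treating $E_h$ monolithically.

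Next I would specialize the stable decomposition constant $C_0$ according to the decomposition in use. For the one-level method based on~\eqref{space_decomp} and~\eqref{1L}, \cref{Thm:1L} gives $C_0^2 \approx 1/(H\delta)$. For the two-level method based on~\eqref{space_decomp_2L} and~\eqref{2L}, \cref{Thm:2L} gives $C_0^2 \approx C_d(H,h)^2\,(1 + H/\delta)$, with $C_d(H,h)$ as in~\eqref{C_d}. Plugging these into the bound from the previous step produces the two stated values of $C_0^2$ in the corollary.

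There is no real obstacle here; the corollary is essentially a packaging step that makes the $\phi$-independence of the convergence rate explicit in a single statement, and the only subtlety worth flagging in the write-up is the use of $\mu = 1$, which relies on the particular splitting~\eqref{FG} rather than the trivial choice $F_h = E_h$, $G_h = 0$.
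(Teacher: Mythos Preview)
Your proposal is correct and matches the paper's own argument: the corollary is obtained by combining \cref{Thm:conv} with $\mu = 1$ (coming from the splitting~\eqref{FG}) and then inserting the $C_0^2$ estimates from \cref{Thm:1L,Thm:2L}. The paper does not give a separate proof beyond this, and your emphasis on why $\mu = 1$ holds is exactly the point the paper makes.
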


% Remark: Nonsmooth G
\begin{remark}
\label{Rem:nonsmooth}
It is worth noting that the only properties of $G_h$ that are used in the analysis presented in this section are the separable property~\eqref{G_pointwise} and the convexity of $\phi_h^x$.
As a result, the findings in this section are applicable even when $\phi$ in~\eqref{model} is nonsmooth.
Examples of such problems include variational inequalities~\cite{BK:2012,LP:2021} and $L^1$-penalized variational problems~\cite{TSFO:2015}.
\end{remark}

% Numerical results
\section{Numerical results}
\label{Sec:Numerical}
In the section, we present numerical results that verify our theoretical findings. 
All the algorithms were programmed using MATLAB~R2022b and performed on a desktop equipped with AMD Ryzen~5 5600X CPU~(3.7GHz, 6C), 40GB RAM, and the operating system Windows~10 Pro.

% Subsection: Monomial nonlinearity
\subsection{Monomial nonlinearity}
As the first example, we consider the following semilinear boundary value problem, which was introduced in \cref{Ex:monomial}:
\begin{equation} \begin{split}
\label{monomial}
- \Delta u + \alpha |u|^{m-2} u = g \quad &\text{ in } \Omega, \\
u = 0 \quad &\text{ on } \partial \Omega,
\end{split} \end{equation}
where $\alpha \geq 0$ and $m \in \mathbb{Z}_{\geq 2}$.
In~\eqref{monomial}, we set $\Omega = (0,1)^2 \subset \mathbb{R}^2$, and $g$ is chosen so that the exact solution $u$ is given by $u(x,y) = x (1-x) \sin \pi y$.
As in~\cite[Appendix~A]{LP:2022}, we partition the domain $\Omega$ into $2 \times 1/H \times 1/H$ uniform triangles to create a coarse triangulation $\cT_H$, and then refine $\cT_H$ to obtain a fine triangulation $\cT_h$, consisting of a total of $2 \times 1/h \times 1/h$ uniform triangles.
Each nonoverlapping subdomain $\Omega_k$, $1 \leq k \leq N = 1/H \times 1/H$, is defined by a rectangular region composed of two coarse triangles that share a diagonal edge.
We then obtain the corresponding overlapping subdomain $\Omega_l'$ by extending $\Omega_k$ with surrounding layers of fine triangles from $\cT_h$ with width $\delta$.
Note that the domain decomposition $\{ \Omega_k' \}_{k=1}^N$ can be colored with four colors if $\delta$ is small enough.
Consequently, we set the step size $\tau$ of \cref{Alg:ASM} as follows:
\begin{equation*}
\tau = \begin{cases}
\frac{1}{4}, \quad & \text{in the one-level method,} \\
\frac{1}{5}, \quad & \text{in the two-level method.}
\end{cases}
\end{equation*}

In \cref{Alg:ASM}, we initialize $u^{(0)}$ as zero.
For solving the local and coarse problems defined on $V_k$, $0 \leq k \leq N$, we utilize the damped Newton method~\cite{BV:2004} along with the stop criterion given by
\begin{equation}
\label{stop}
\left| \frac{E_k^{(n)} (w_k^{(j+1)}) - E_k^{(n)} (w_k^{(j)})}{E_k^{(n)} (w_k^{(j+1)})} \right| < 10^{-12}.
\end{equation}
Here $E_k^{(n)}$ is the energy functional associated with the subspace $V_k$ at the $n$th outer iteration, and $j$ is the number of inner iterations.
To establish a reference solution $u_h$, we perform a sufficient number of iterations of the damped Newton method applied to the full-dimension problem~\eqref{model_FEM}.

% Table: Monomial
\begin{table}
\centering
\caption{Geometric averages of the linear convergence rates $\frac{E_h (u^{(n+1)}) - E_h (u_h)}{E_h (u^{(n)}) - E_h (u_h)}$ over 30 iterations of additive Schwarz methods for the monomial nonlinearity problem~\eqref{monomial}.}
\resizebox{\textwidth}{!}{
\begin{tabular}{c|cccccc|cccc}
 \cline{1-5} \cline{7-11}
 $m$ & $\alpha = 10^0$ & $\alpha = 10^1$ & $\alpha = 10^2$ & $\alpha = 10^3$ & & 
 $m$ & $\alpha = 10^0$ & $\alpha = 10^1$ & $\alpha = 10^2$ & $\alpha = 10^3$ \\
 \cline{1-5} \cline{7-11}
 3 & 0.9183 & 0.9109 & 0.8391 & 0.6226 & &
 3 & 0.7134 & 0.7036 & 0.6534 & 0.5742 \\
 6 & 0.9191 & 0.9190 & 0.9184 & 0.9114 & &
 6 & 0.7146 & 0.7144 & 0.7126 & 0.6988 \\
 9 & 0.9191 & 0.9191 & 0.9191 & 0.9190 & &
 9 & 0.7146 & 0.7146 & 0.7145 & 0.7143 \\ 
 12 & 0.9191 & 0.9191 & 0.9191 & 0.9191 & & 
 12 & 0.7146 & 0.7146 & 0.7146 & 0.7146 \\
 \cline{1-5} \cline{7-11}
 \multicolumn{5}{c}{(a)~One-level, $h = 2^{-5}$, $H = 2^{-2}$, $\delta = 2h$ } & &
 \multicolumn{5}{c}{(b)~Two-level, $h = 2^{-5}$, $H = 2^{-2}$, $\delta = 2h$} \vspace{0.3cm}\\

 \cline{1-5} \cline{7-11}
 $m$ & $\alpha = 10^0$ & $\alpha = 10^1$ & $\alpha = 10^2$ & $\alpha = 10^3$ & & 
 $m$ & $\alpha = 10^0$ & $\alpha = 10^1$ & $\alpha = 10^2$ & $\alpha = 10^3$ \\
 \cline{1-5} \cline{7-11}
 3 & 0.9773 & 0.9757 & 0.9568 & 0.7950 & &
 3 & 0.6753 & 0.6712 & 0.6477 & 0.5917 \\
 6 & 0.9775 & 0.9774 & 0.9774 & 0.9766 & &
 6 & 0.6757 & 0.6757 & 0.6751 & 0.6708 \\
 9 & 0.9775 & 0.9775 & 0.9775 & 0.9774 & &
 9 & 0.6757 & 0.6757 & 0.6757 & 0.6757 \\ 
 12 & 0.9775 & 0.9775 & 0.9775 & 0.9775 & & 
 12 & 0.6757 & 0.6757 & 0.6757 & 0.6757 \\
 \cline{1-5} \cline{7-11}
 \multicolumn{5}{c}{(c)~One-level, $h = 2^{-6}$, $H = 2^{-3}$, $\delta = 2h$ } & &
 \multicolumn{5}{c}{(d)~Two-level, $h = 2^{-6}$, $H = 2^{-3}$, $\delta = 2h$} \\
\end{tabular}
}
\label{Table:monomial}
\end{table}

\cref{Table:monomial} presents a summary of the geometric averages of the linear convergence rates.
These rates are evaluated as the ratios $\frac{E_h (u^{(n+1)}) - E_h (u_h)}{E_h (u^{(n)}) - E_h (u_h)}$,  computed over 30~iterations of one- and two-level additive Schwarz methods for~\eqref{monomial}.
Across all cases, we consistently observe that the convergence rates exhibit a uniform upper bound regardless of the values of $\alpha$ and $m$.
More precisely, while the convergence rates generally increase when with larger values of $m$, they eventually stabilize as $m$ becomes sufficiently large.
This numerical observation verifies the central claim of this paper, shown in \cref{Cor:conv}, namely that the convergence rates of additive Schwarz methods are independent of the nonlinearity of the problem.

% Figure: Monomial
\begin{figure}
\resizebox{\textwidth}{!}{
  \subfloat[][One-level , $H/h = 2^3$, $\delta = 2h$]{\includegraphics[width=0.34\hsize]{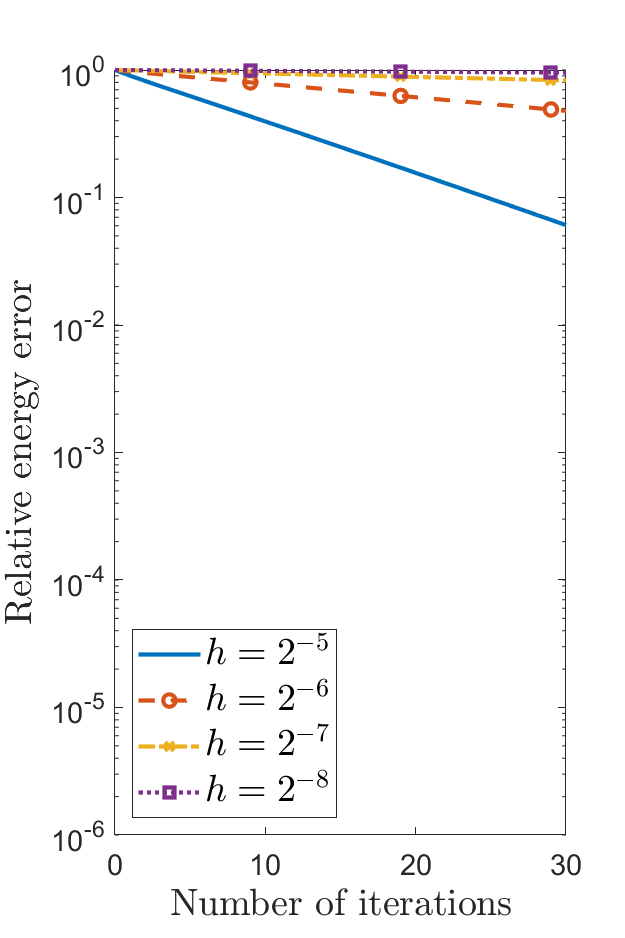}}
  \subfloat[][Two-level , $H/h = 2^3$, $\delta = 2h$]{\includegraphics[width=0.34\hsize]{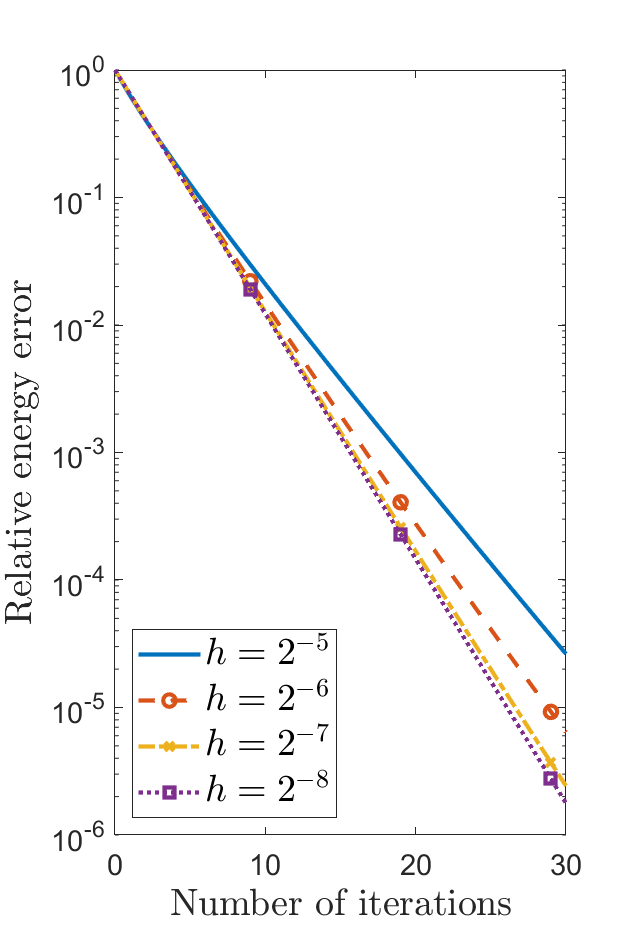}}
  \subfloat[][One-level , $H/h = 2^3$, $\delta = 4h$]{\includegraphics[width=0.34\hsize]{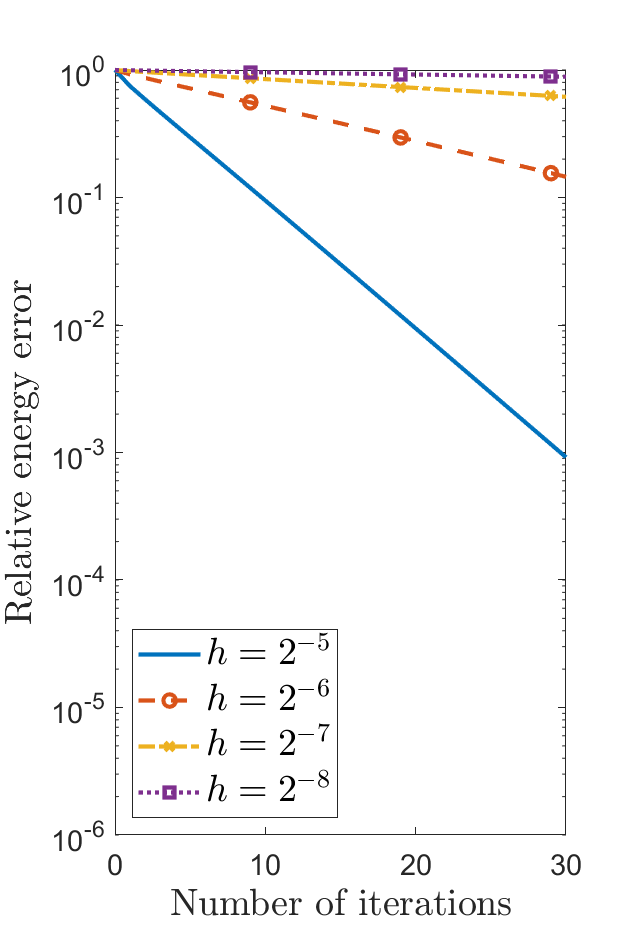}}
  \subfloat[][Two-level , $H/h = 2^3$, $\delta = 4h$]{\includegraphics[width=0.34\hsize]{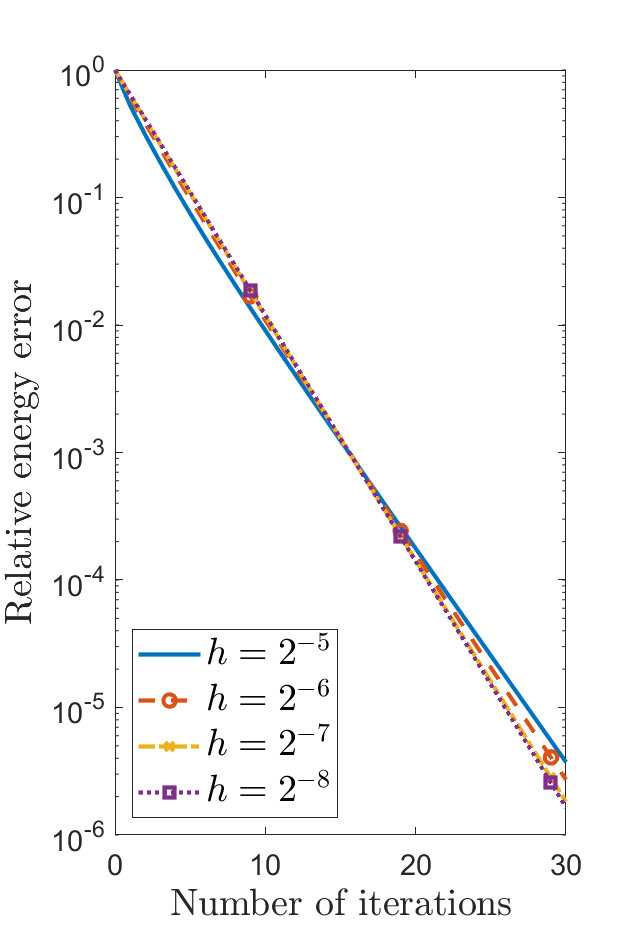}}
}
  \\
\resizebox{\textwidth}{!}{
  \subfloat[][One-level , $H/h = 2^4$, $\delta = 2h$]{\includegraphics[width=0.34\hsize]{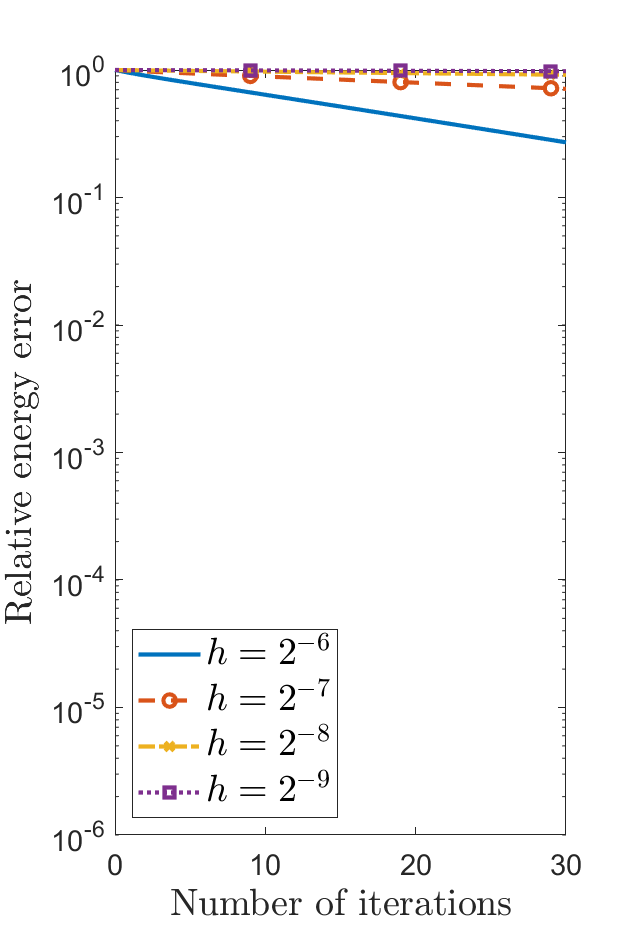}}
  \subfloat[][Two-level , $H/h = 2^4$, $\delta = 2h$]{\includegraphics[width=0.34\hsize]{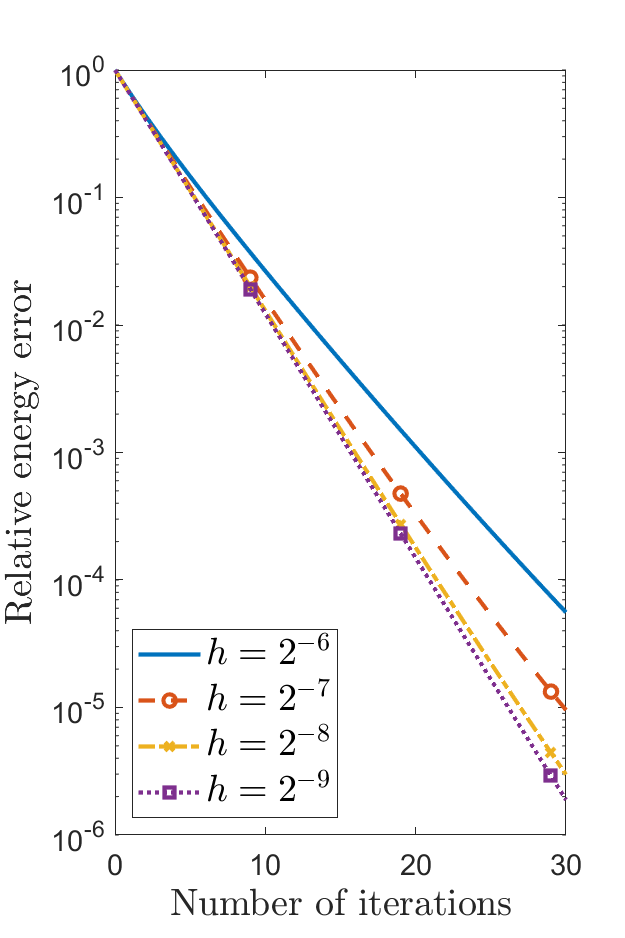}}
  \subfloat[][One-level , $H/h = 2^4$, $\delta = 2h$]{\includegraphics[width=0.34\hsize]{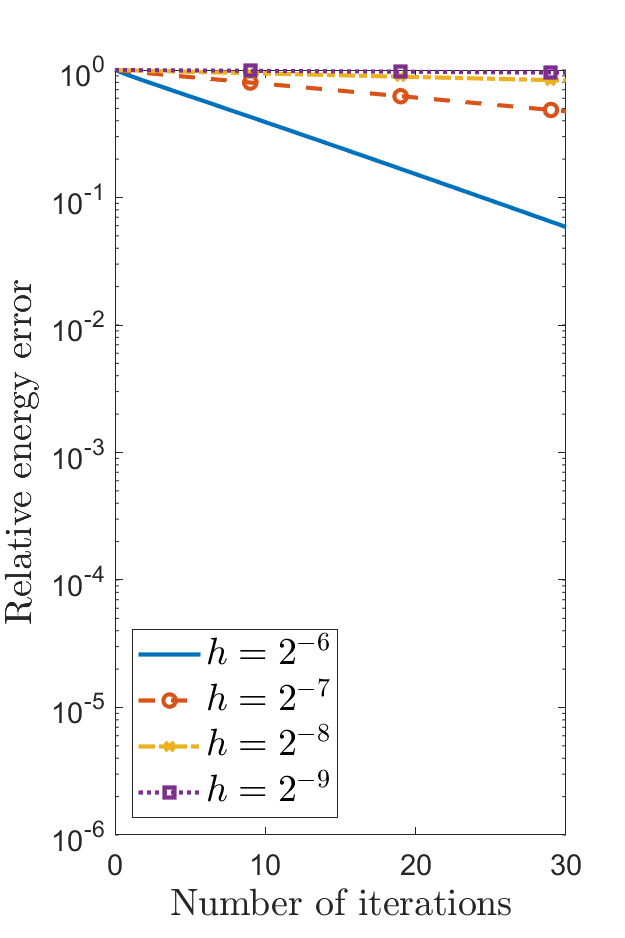}}
  \subfloat[][Two-level , $H/h = 2^4$, $\delta = 4h$]{\includegraphics[width=0.34\hsize]{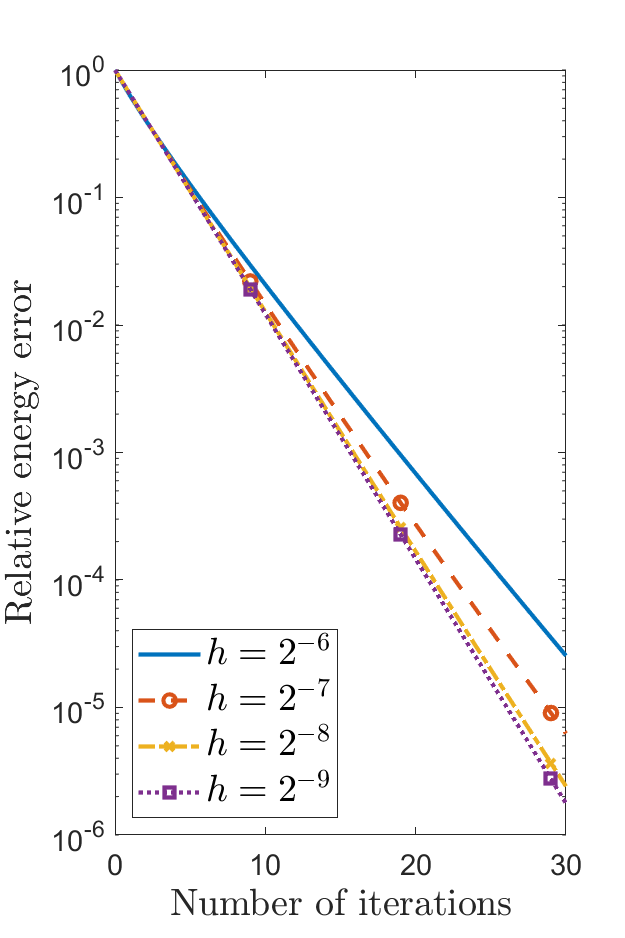}}
}
  \caption{Decay of the relative energy error $\frac{E_h (u^{(n)}) - E_h (u_h)}{|E_h (u_h)|}$ in additive Schwarz methods for the monomial nonlinearity problem~\eqref{monomial}~($m = 3$, $\alpha = 10^1$).}
  \label{Fig:monomial}
\end{figure}

In \cref{Fig:monomial}, the energy error $E_h (u^{(n)}) - E_h (u_h)$ is plotted for various combinations of $h$, $H$, and $\delta$.
It is observed that the convergence rate of the one-level method significantly deteriorates as $h$ decreases.
This observation aligns with \cref{Thm:1L}, which states that the convergence rate of the one-level method becomes dependent on $h^{-2}$ under fixed values of $H/h$ and $H/\delta$.
In contrast, the convergence rate of the two-level method remains stable even as $h$ becomes small.
In particular, the convergence curves for $h = 2^{-8}$ and $h = 2^{-9}$ almost overlap in every case.
These numerical results verify \cref{Thm:2L}, which states that the convergence rate of the two-level method is uniformly bounded when both $H/h$ and $H/\delta$ are fixed.

% Subsection: Nonlinear Poisson--Boltzmann equation
\subsection{Nonlinear Poisson--Boltzmann equation}
Next, we consider the following nonlinear Poisson--Boltzmann equation, which was introduced in \cref{Ex:PB}:
\begin{equation} \begin{split}
\label{PB}
- \Delta u + \sinh \alpha u = g \quad &\text{ in } \Omega, \\
u = 0 \quad &\text{ on } \partial \Omega,
\end{split} \end{equation}
where $\alpha > 0$.
In~\eqref{PB}, we set $\Omega = (0, 1)^2 \subset \mathbb{R}^2$ and determine $g$ such that it yields the exact solution $u(x,y) = x (1-x) \sin \pi y$.
For discretization, domain decomposition, and the details of \cref{Alg:ASM}, we adopt the same settings as used in~\eqref{monomial}.

% Table: Poisson--Boltzmann
\begin{table}
\centering
\caption{Geometric averages of the linear convergence rates $\frac{E_h (u^{(n+1)}) - E_h (u_h)}{E_h (u^{(n)}) - E_h (u_h)}$ over 30 iterations of additive Schwarz methods for the nonlinear Poisson--Boltzmann equation~\eqref{PB}.}
\resizebox{0.9\textwidth}{!}{
\begin{tabular}{ccccccccc}
 \cline{1-4} \cline{6-9}
 $\alpha = 10^{-2}$ & $\alpha = 10^{-1}$ & $\alpha = 10^0$ & $\alpha = 10^1$ & & 
 $\alpha = 10^{-2}$ & $\alpha = 10^{-1}$ & $\alpha = 10^0$ & $\alpha = 10^1$ \\
 \cline{1-4} \cline{6-9}
 0.8112 & 0.8106 & 0.8046 & 0.6831 & &
 0.6705 & 0.6702 & 0.6665 & 0.6167 \\
 \cline{1-4} \cline{6-9}
 \multicolumn{4}{c}{(a)~One-level, $h = 2^{-5}$, $H = 2^{-2}$, $\delta = 2h$ } & &
 \multicolumn{4}{c}{(b)~Two-level, $h = 2^{-5}$, $H = 2^{-2}$, $\delta = 2h$} \vspace{0.3cm}\\

 \cline{1-4} \cline{6-9}
 $\alpha = 10^{-2}$ & $\alpha = 10^{-1}$ & $\alpha = 10^0$ & $\alpha = 10^1$ & & 
 $\alpha = 10^{-2}$ & $\alpha = 10^{-1}$ & $\alpha = 10^0$ & $\alpha = 10^1$ \\
 \cline{1-4} \cline{6-9}
 0.9433 & 0.9430 & 0.9407 & 0.8905 & &
 0.6574 & 0.6572 & 0.6554 & 0.6277 \\
 \cline{1-4} \cline{6-9}
 \multicolumn{4}{c}{(c)~One-level, $h = 2^{-6}$, $H = 2^{-3}$, $\delta = 2h$ } & &
 \multicolumn{4}{c}{(d)~Two-level, $h = 2^{-6}$, $H = 2^{-3}$, $\delta = 2h$} \\
\end{tabular}
}
\label{Table:PB}
\end{table}

% Figure: Poisson--Boltzmann
\begin{figure}
\resizebox{\textwidth}{!}{
  \subfloat[][One-level , $H/h = 2^3$, $\delta = 2h$]{\includegraphics[width=0.34\hsize]{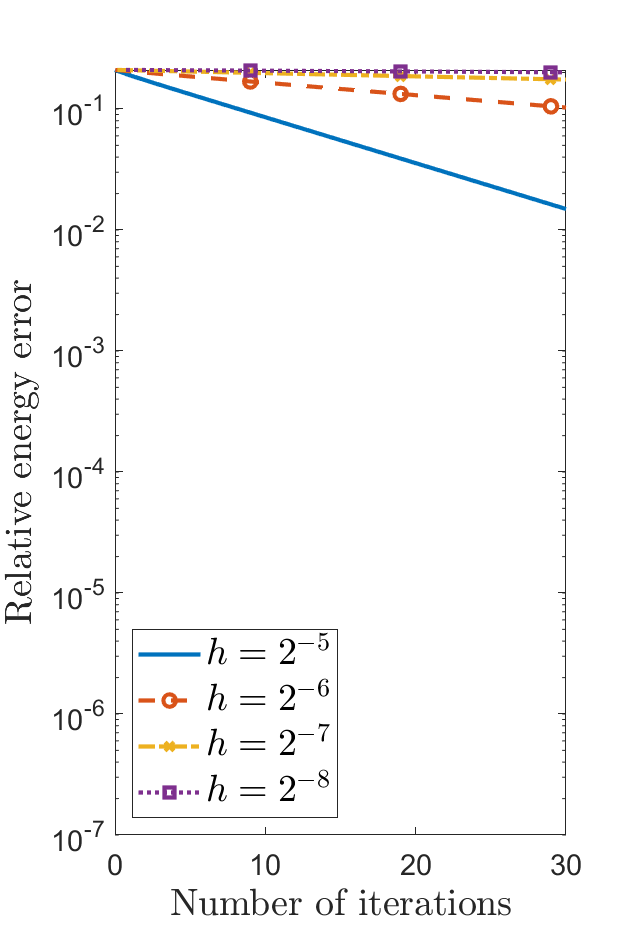}}
  \subfloat[][Two-level , $H/h = 2^3$, $\delta = 2h$]{\includegraphics[width=0.34\hsize]{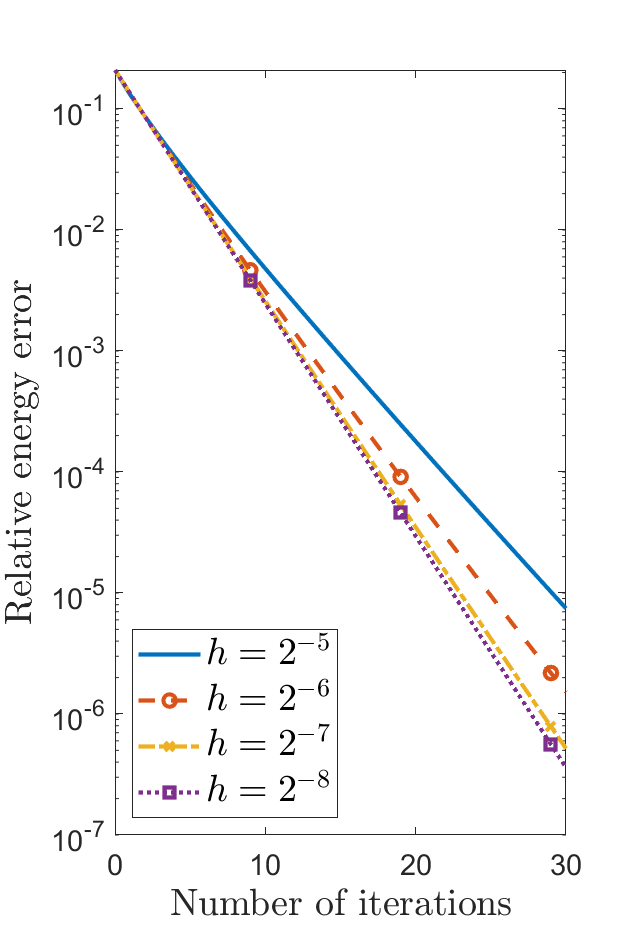}}
  \subfloat[][One-level , $H/h = 2^3$, $\delta = 4h$]{\includegraphics[width=0.34\hsize]{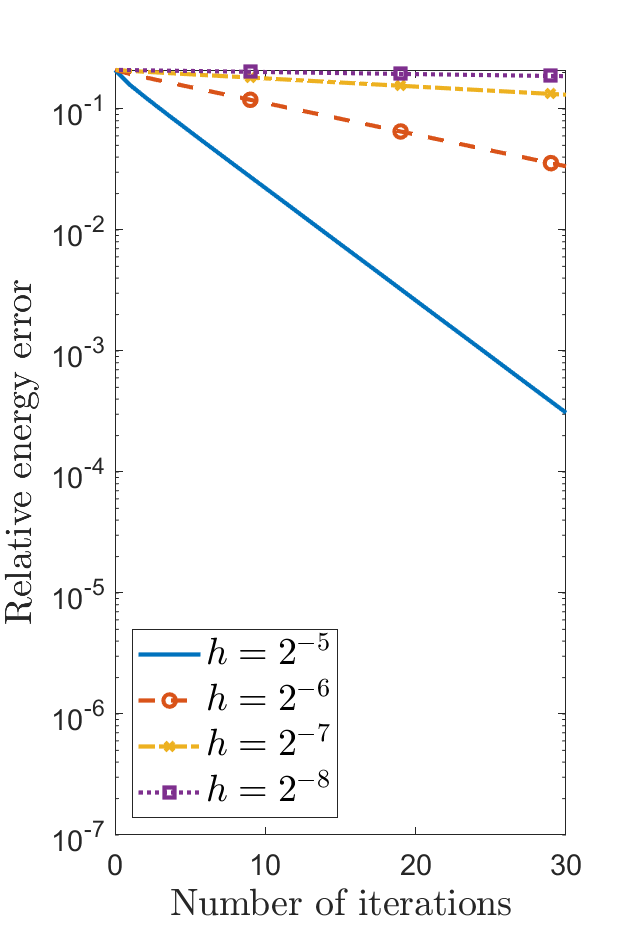}}
  \subfloat[][Two-level , $H/h = 2^3$, $\delta = 4h$]{\includegraphics[width=0.34\hsize]{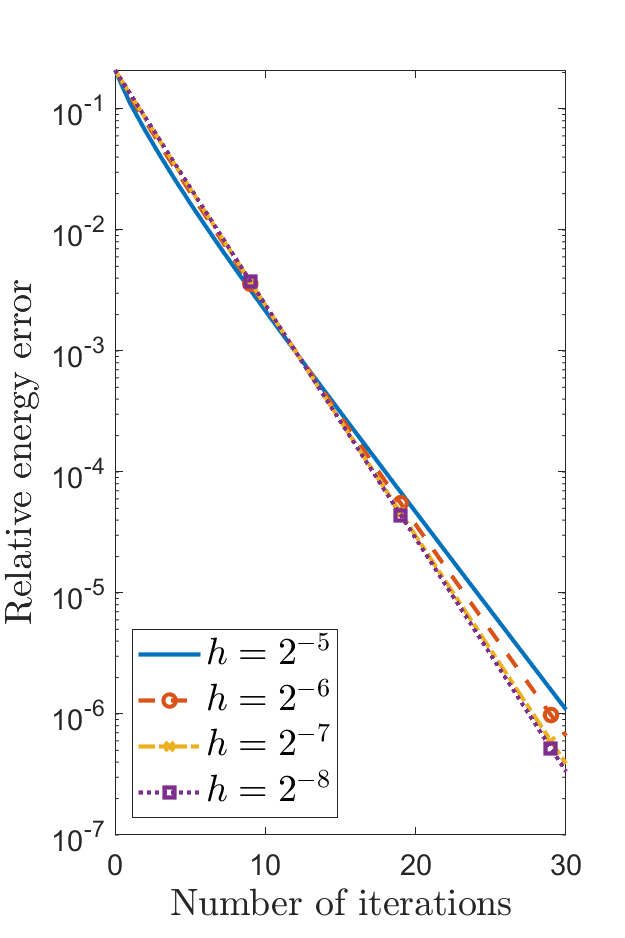}}
}
  \\
\resizebox{\textwidth}{!}{
  \subfloat[][One-level , $H/h = 2^4$, $\delta = 2h$]{\includegraphics[width=0.34\hsize]{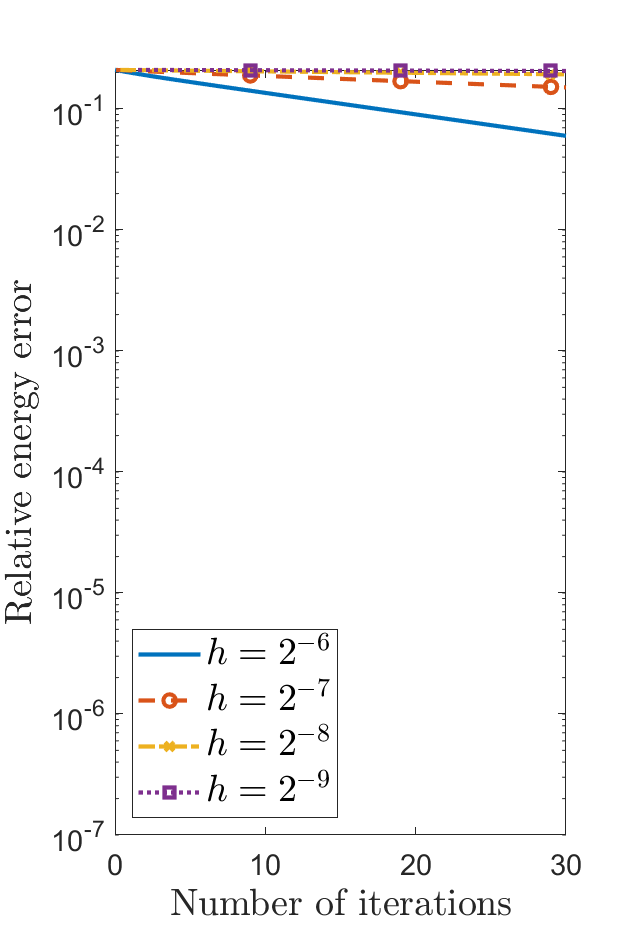}}
  \subfloat[][Two-level , $H/h = 2^4$, $\delta = 2h$]{\includegraphics[width=0.34\hsize]{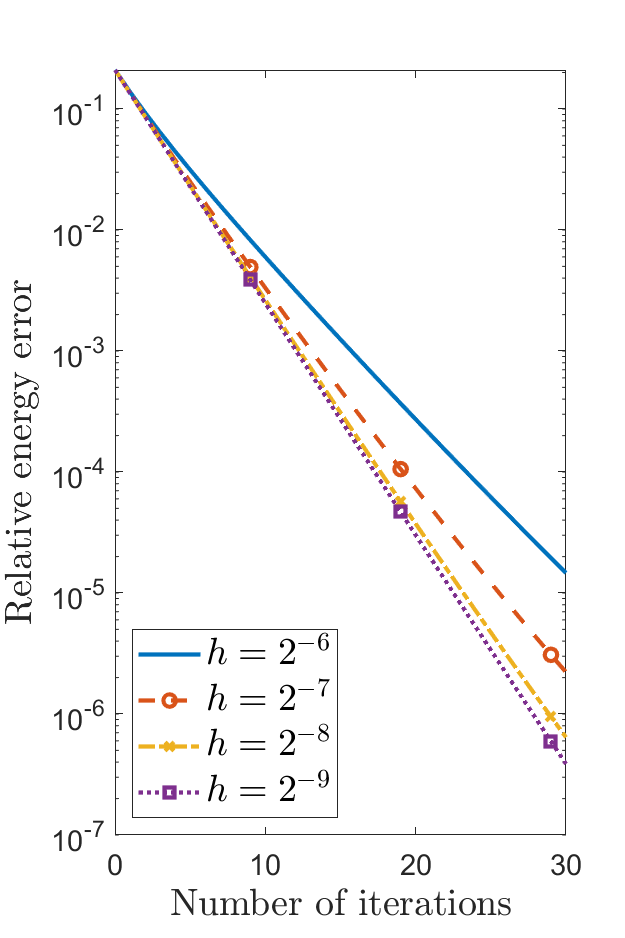}}
  \subfloat[][One-level , $H/h = 2^4$, $\delta = 2h$]{\includegraphics[width=0.34\hsize]{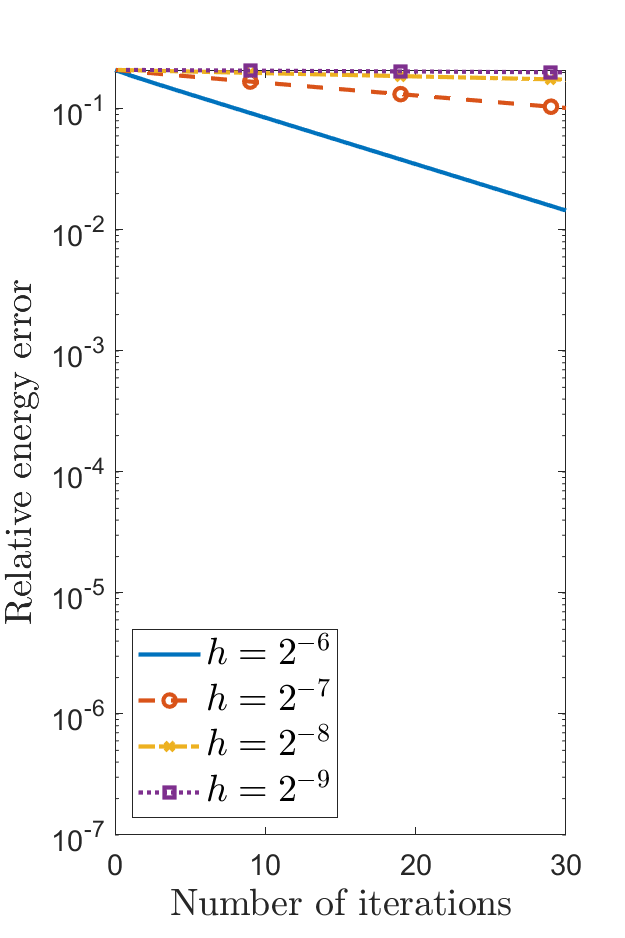}}
  \subfloat[][Two-level , $H/h = 2^4$, $\delta = 4h$]{\includegraphics[width=0.34\hsize]{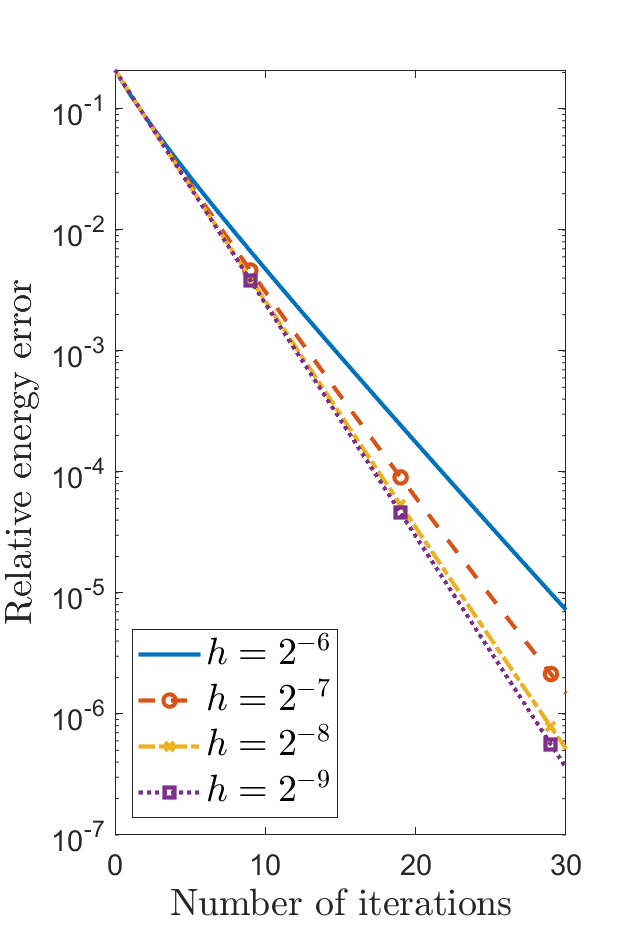}}
}
  \caption{Decay of the relative energy error $\frac{E_h (u^{(n)}) - E_h (u_h)}{|E_h (u_h)|}$ in additive Schwarz methods for the nonlinear Poisson--Boltzmann equation~\eqref{PB}~($\alpha = 10^0$).}
  \label{Fig:PB}
\end{figure}

The convergence results of the one- and two-level additive Schwarz methods for~\eqref{PB} are presented in \cref{Table:PB} and \cref{Fig:PB}.
In \cref{Table:PB}, we observe a decreasing trend in convergence rates as $\alpha$ takes larger values.
This phenomenon is attributed to the increasing dominance of the nonlinear term, which does not impact the convergence rate, as $\alpha$ grows.
In particular, we find that the convergence rates are bounded above regardless of the values of $\alpha$.
In \cref{Fig:PB}, the convergence rate of the two-level method remains stable as $h$ diminishes, whereas that of the one-level method deteriorates, thereby agreeing with \cref{Thm:1L,Thm:2L}.

% Table: Newton iterations
\begin{table}
\centering
\caption{Maximum numbers of Newton iterations to solve the local and coarse problems during 30 iterations of additive Schwarz methods for the nonlinear Poisson--Boltzmann equation~\eqref{PB}.}
\resizebox{0.9\textwidth}{!}{
\begin{tabular}{ccccccccc}
 \cline{1-4} \cline{6-9}
 $\alpha = 10^{-2}$ & $\alpha = 10^{-1}$ & $\alpha = 10^0$ & $\alpha = 10^1$ & & 
 $\alpha = 10^{-2}$ & $\alpha = 10^{-1}$ & $\alpha = 10^0$ & $\alpha = 10^1$ \\
 \cline{1-4} \cline{6-9}
 2 & 2 & 2 & 4 & &
 2 & 2 & 3 & 5 \\
 \cline{1-4} \cline{6-9}
 \multicolumn{4}{c}{(a)~One-level, $h = 2^{-5}$, $H = 2^{-2}$, $\delta = 2h$ } & &
 \multicolumn{4}{c}{(b)~Two-level, $h = 2^{-5}$, $H = 2^{-2}$, $\delta = 2h$} \vspace{0.3cm}\\

 \cline{1-4} \cline{6-9}
 $\alpha = 10^{-2}$ & $\alpha = 10^{-1}$ & $\alpha = 10^0$ & $\alpha = 10^1$ & & 
 $\alpha = 10^{-2}$ & $\alpha = 10^{-1}$ & $\alpha = 10^0$ & $\alpha = 10^1$ \\
 \cline{1-4} \cline{6-9}
 2 & 2 & 2 & 3 & &
 2 & 2 & 3 & 6 \\
 \cline{1-4} \cline{6-9}
 \multicolumn{4}{c}{(c)~One-level, $h = 2^{-6}$, $H = 2^{-3}$, $\delta = 2h$ } & &
 \multicolumn{4}{c}{(d)~Two-level, $h = 2^{-6}$, $H = 2^{-3}$, $\delta = 2h$} \\
\end{tabular}
}
\label{Table:Newton}
\end{table}

It is worth noting that the nonlinearity in the energy functional affects on the local and coarse problems, whereas it does not affect on the convergence of additive Schwarz methods.
In \cref{Table:Newton}, we summarize the maximum numbers of Newton iterations required to solve the local and coarse problems during 30 iterations of additive Schwarz methods for~\eqref{PB}.
We observe that the maximum number of Newton iterations increases with higher values of $\alpha$, indicating that stronger nonlinearity corresponds to more Newton iterations.
This suggests that the nonlinearity is fully localized, affecting only the local problems and leaving the convergence of additive Schwarz methods unaffected.
Meanwhile, the maximum numbers of Newton iterations do not differ significantly with respect to the mesh size $h$.
This phenomenon aligns with the well-known mesh-independence principle, as discussed in, e.g.,~\cite{ABPR:1986}.

% Subsection: $L^1$-penalized problem
\subsection{\texorpdfstring{$L^1$}{L1}-penalized problem}
As mentioned in \cref{Rem:nonsmooth}, the convergence theorems presented in \cref{Sec:DD} remain valid even when $\phi$ in~\eqref{model} is nonsmooth.
To numerically verify this, we consider the following $L^1$-penalized variational problem~\cite{TSFO:2015}:
\begin{equation}
\label{L1}
    \min_{u \in H_0^1 (\Omega)} \intO \left( \frac{1}{2} | \nabla u |^2 - gu + \alpha | u | \right) \,dx,
\end{equation}
where $\alpha > 0$.
In~\eqref{L1}, we set $\Omega = (0,1)^2 \subset \mathbb{R}^2$ and $g(x,y) = 10^3 x(1-x) \sin \pi y$.
For discretization and domain decomposition, we adopt the same settings as used in~\eqref{monomial}.

As the problem~\eqref{L1} is nonsmooth, the Newton method is not applicable.
Instead, we employ the fast gradient method with gradient adaptive restart~(AFGM)~\cite{OC:2015}, which is known to be an efficient first-order optimization algorithm for composite convex optimization; see also~\cite{KF:2018,Park:2021}.
For solving the local problems, we utilize AFGM along with the stop criterion~\eqref{stop}.
To tackle the coarse problems, we apply AFGM to the dual formulations of the coarse problems, akin to the approach introduced in~\cite[Proposition~5.1]{Park:2023}.
A reference solution $u_h$ is obtained through a sufficient number of iterations of the AFGM.

% Table: L1
\begin{table}
\centering
\caption{Geometric averages of the linear convergence rates $\frac{E_h (u^{(n+1)}) - E_h (u_h)}{E_h (u^{(n)}) - E_h (u_h)}$ over 30 iterations of additive Schwarz methods for the $L^1$-penalized problem~\eqref{L1}.}
\resizebox{0.9\textwidth}{!}{
\begin{tabular}{m{1.2cm}m{1.2cm}m{1.2cm}m{1.2cm}cm{1.2cm}m{1.2cm}m{1.2cm}m{1.2cm}}
 \cline{1-4} \cline{6-9}
 $\alpha = 10$ & $\alpha = 20$ & $\alpha = 30$ & $\alpha = 40$ & & 
 $\alpha = 10$ & $\alpha = 20$ & $\alpha = 30$ & $\alpha = 40$ \\
 \cline{1-4} \cline{6-9}
 0.9193 & 0.9192 & 0.9192 & 0.9191 & &
 0.7148 & 0.7148 & 0.7148 & 0.7148 \\
 \cline{1-4} \cline{6-9}
 \multicolumn{4}{c}{(a)~One-level, $h = 2^{-5}$, $H = 2^{-2}$, $\delta = 2h$ } & &
 \multicolumn{4}{c}{(b)~Two-level, $h = 2^{-5}$, $H = 2^{-2}$, $\delta = 2h$} \vspace{0.3cm}\\

 \cline{1-4} \cline{6-9}
 $\alpha = 10$ & $\alpha = 20$ & $\alpha = 30$ & $\alpha = 40$ & & 
 $\alpha = 10$ & $\alpha = 20$ & $\alpha = 30$ & $\alpha = 40$ \\
 \cline{1-4} \cline{6-9}
 0.9777 & 0.9777 & 0.9776 & 0.9775 & &
 0.6755 & 0.6755 & 0.6755 & 0.6755 \\
 \cline{1-4} \cline{6-9}
 \multicolumn{4}{c}{(c)~One-level, $h = 2^{-6}$, $H = 2^{-3}$, $\delta = 2h$ } & &
 \multicolumn{4}{c}{(d)~Two-level, $h = 2^{-6}$, $H = 2^{-3}$, $\delta = 2h$} \\
\end{tabular}
}
\label{Table:L1}
\end{table}

% Figure: L1
\begin{figure}
\resizebox{\textwidth}{!}{
  \subfloat[][One-level , $H/h = 2^3$, $\delta = 2h$]{\includegraphics[width=0.34\hsize]{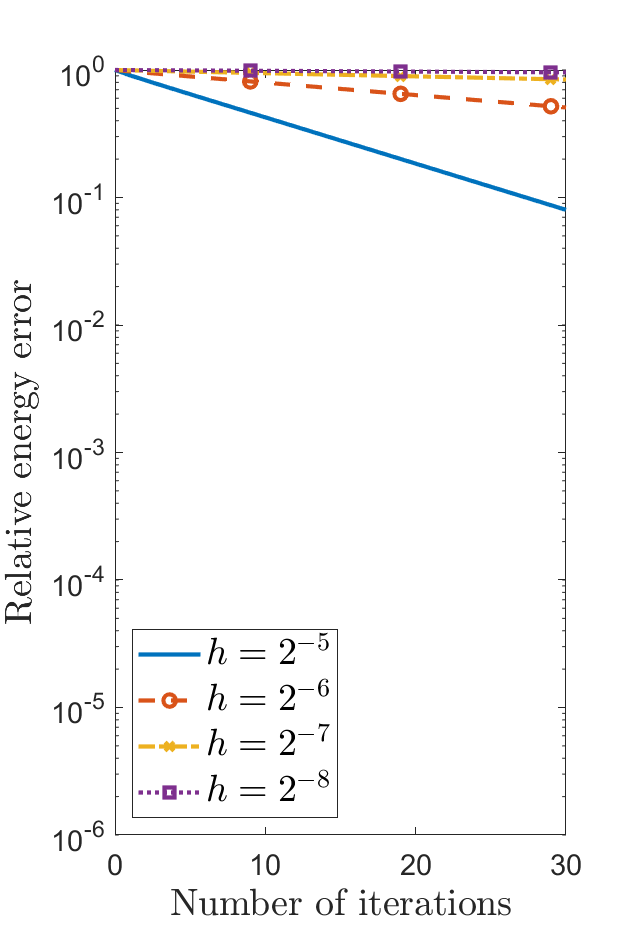}}
  \subfloat[][Two-level , $H/h = 2^3$, $\delta = 2h$]{\includegraphics[width=0.34\hsize]{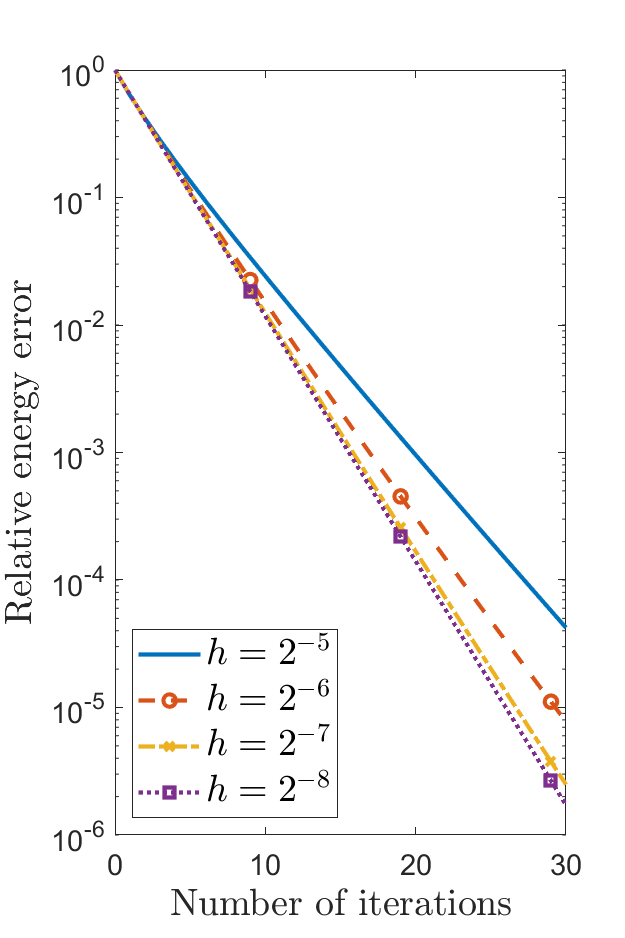}}
  \subfloat[][One-level , $H/h = 2^3$, $\delta = 4h$]{\includegraphics[width=0.34\hsize]{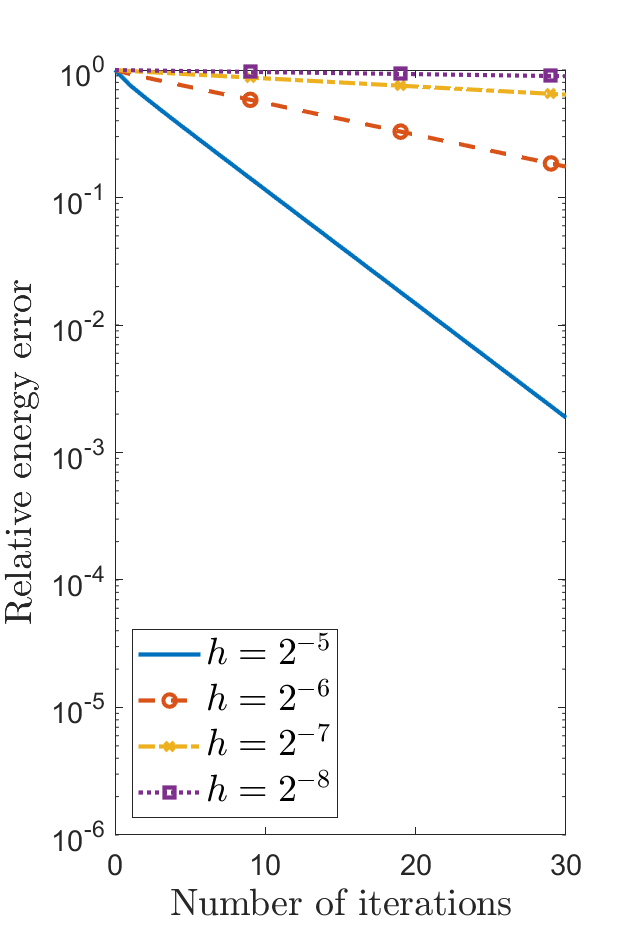}}
  \subfloat[][Two-level , $H/h = 2^3$, $\delta = 4h$]{\includegraphics[width=0.34\hsize]{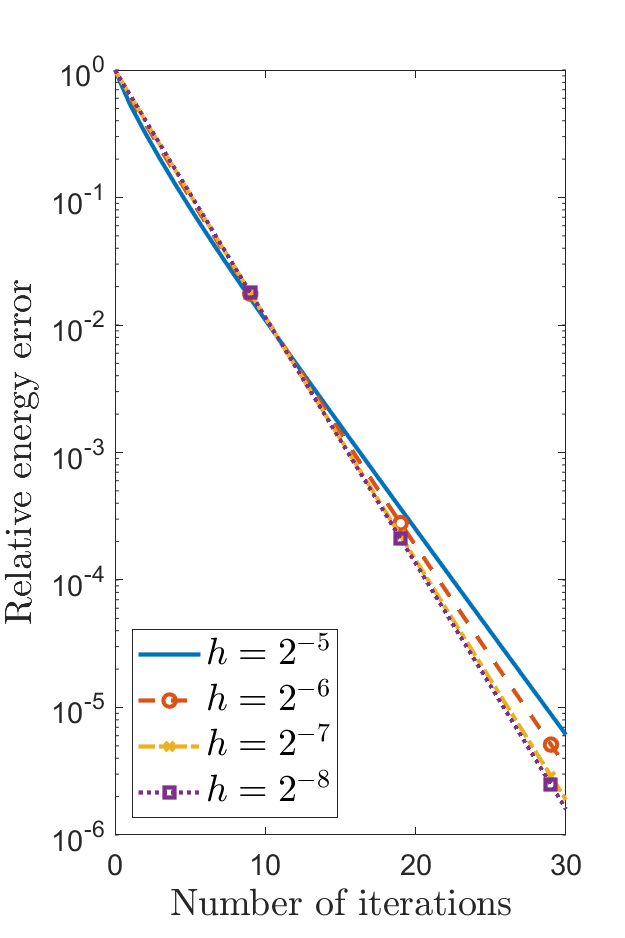}}
}
  \\
\resizebox{\textwidth}{!}{
  \subfloat[][One-level , $H/h = 2^4$, $\delta = 2h$]{\includegraphics[width=0.34\hsize]{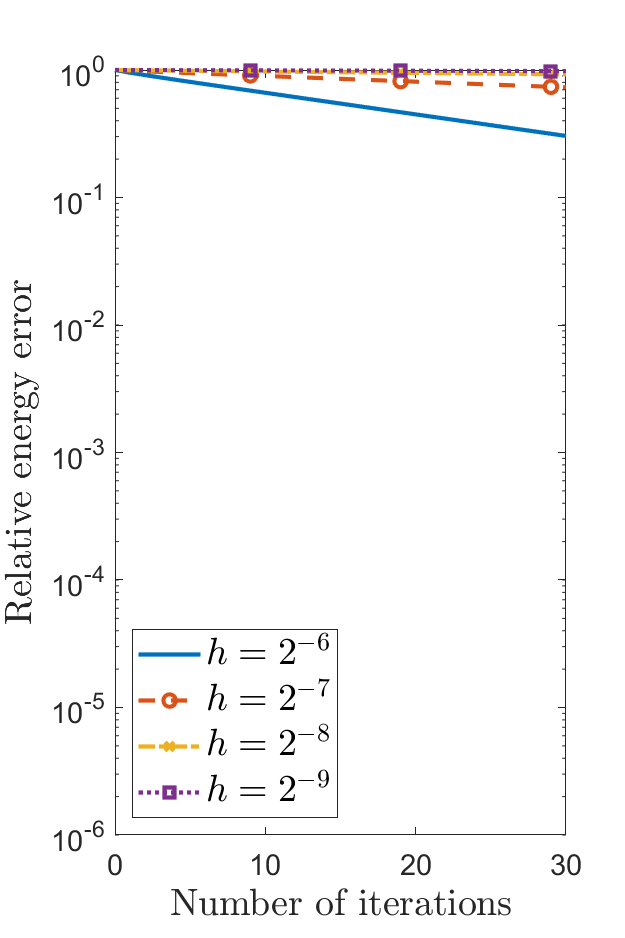}}
  \subfloat[][Two-level , $H/h = 2^4$, $\delta = 2h$]{\includegraphics[width=0.34\hsize]{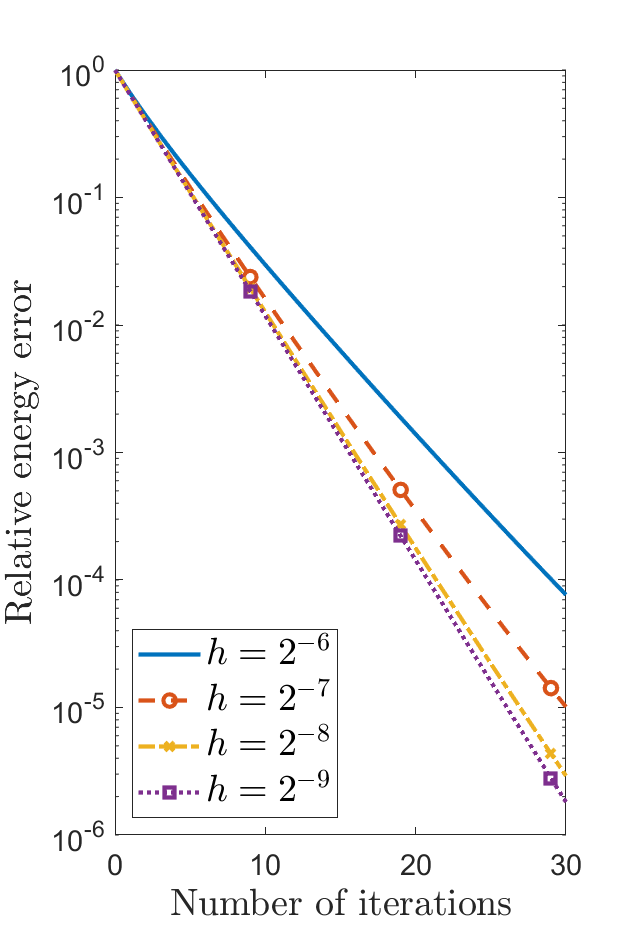}}
  \subfloat[][One-level , $H/h = 2^4$, $\delta = 2h$]{\includegraphics[width=0.34\hsize]{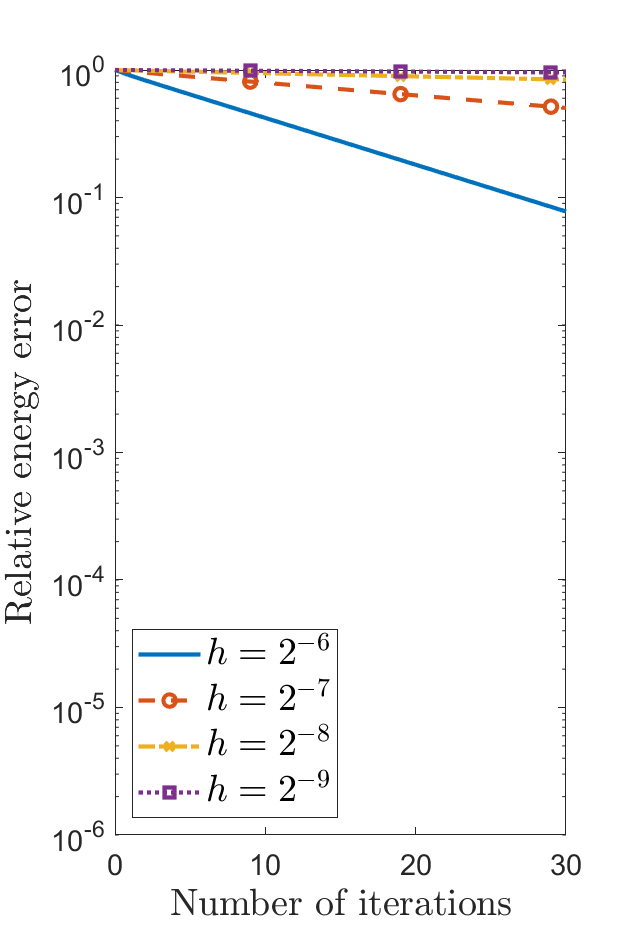}}
  \subfloat[][Two-level , $H/h = 2^4$, $\delta = 4h$]{\includegraphics[width=0.34\hsize]{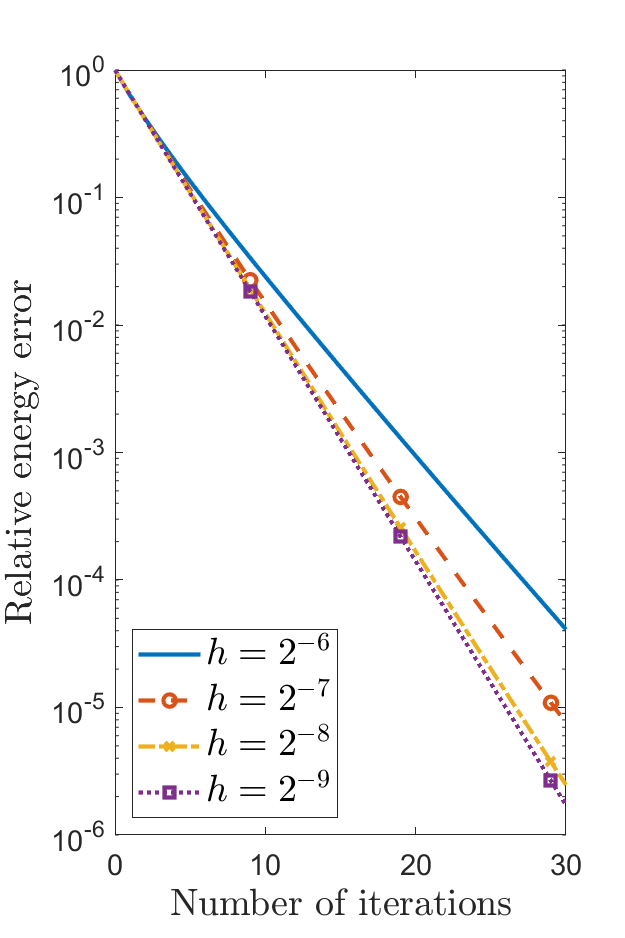}}
}
  \caption{Decay of the relative energy error $\frac{E_h (u^{(n)}) - E_h (u_h)}{|E_h (u_h)|}$ in additive Schwarz methods for the $L^1$-penalized problem~\eqref{L1}~($\alpha = 10$).}
  \label{Fig:L1}
\end{figure}

The convergence results of the one- and two-level additive Schwarz methods for~\eqref{L1} are summarized in \cref{Table:L1} and \cref{Fig:L1}.
From these results, we deduce similar conclusions as those observed in the cases of smooth problems such as~\eqref{monomial} and~\eqref{PB}.
In \cref{Table:L1}, the convergence rate is observed to be uniformly bounded with respect to $\alpha$.
In \cref{Fig:L1}, the two-level method exhibits numerical scalability, with the convergence rate remaining stable when $H/h$ and $H/\delta$ are fixed.

% Section: Concluding remarks
\section{Concluding remarks}
\label{Sec:Conclusion}
In this paper, we studied additive Schwarz methods for semilinear elliptic problems with convex energy functionals.
We established that nonlinearity of a semilinear elliptic problem can be localized, in the sense that the convergence rates of additive Schwarz methods are independent of the nonlinear term in the problem.
Hence, we can conclude that solving a semilinear is not much more difficult than solving a linear equation in view of domain decomposition methods, which is a similar conclusion to the one made in~\cite{Xu:1994}.

This paper raises several natural questions for future research.
The first question pertains to the extension of our analysis to quasilinear elliptic problems.
While existing results~\cite{Park:2020,TX:2002} have addressed the sublinear convergence of additive Schwarz methods for the $p$-Laplacian equation, which is an important example of degenerate quasilinear problems, a recent study~\cite{LP:2022} demonstrated that a sharper convergence estimate can be obtained by utilizing the notion of quasi-norm~\cite{BL:1993,EL:2005}.
Combining the idea of quasi-norm analysis with the results presented in paper is not a straightforward task.
The analysis provided in this paper relies on certain results that lack a quasi-norm generalization, including the trace theorem~\cite{DW:1994} and discrete Sobolev inequality~\cite{BX:1991}.
Consequently, extending the approach of this paper to quasilinear elliptic problems requires further investigation.

Another natural question is the extension to higher-order problems of the form
\begin{equation*}
\min_{u \in H_0^m (\Omega)} \intO \left( \frac{1}{2} | \nabla^m u |^2 + \phi (x, u) \right) \,dx,
\end{equation*}
where $m \in \mathbb{Z}_{\geq 2}$.
In our analysis, the positivity-preserving interpolation operator $J_H$ given in~\eqref{J_H} played a crucial role.
Meanwhile, constructing a positivity-preserving interpolation operator in higher-order finite element spaces has been regarded as a challenging problem for several decades.
In particular, in~\cite{NW:2001}, it was proven that linear positivity-preserving interpolation operators for higher-order finite elements with sufficient accuracy do not exist.
Very recently, a nonlinear positivity-preserving interpolation operator for the case $m = 2$ was proposed in~\cite{Park:2023}.
To extend our analysis to higher-order problems, the development of such positivity-preserving interpolation operators should be addressed first.

\appendix
% Appendix Proof of Prop:semilinear_convex
\section{Proof of \texorpdfstring{\cref{Prop:semilinear_convex}}{Proposition 2.1}}
\label{App:Semilinear_convex}
In this appendix, we provide a proof of \cref{Prop:semilinear_convex}.
We first need the following elementary lemma, which can be easily proven by using the definition of convexity.

% Lemma: Convex
\begin{lemma}
\label{Lem:convex}
Let $f \colon \mathbb{R} \rightarrow \mathbb{R}$ be a convex function.
Then, for any $x \in \mathbb{R}$, the function $t \mapsto \frac{f(x+t) - f(x)}{t}$~($t \neq 0$) is increasing.
\end{lemma}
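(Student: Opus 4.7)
The plan is to reduce the claim to the classical three-slope inequality for convex functions: for any $a<b<c$,
\[
\frac{f(b)-f(a)}{b-a}\;\le\;\frac{f(c)-f(a)}{c-a}\;\le\;\frac{f(c)-f(b)}{c-b}.
\]
This is an immediate consequence of writing $b$ as the convex combination $b = \frac{c-b}{c-a}\,a + \frac{b-a}{c-a}\,c$ in the defining inequality of convexity of $f$. I would either cite it as standard or dispatch it in a single line before the main argument.

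Given $t_1 < t_2$ with $t_1, t_2 \neq 0$, I would split into three cases based on the sign pattern of $(t_1, t_2)$ and in each case take $(a,b,c)$ to be the three points $x$, $x+t_1$, $x+t_2$ sorted in increasing order. When $0 < t_1 < t_2$, the slopes $\frac{f(x+t_1)-f(x)}{t_1}$ and $\frac{f(x+t_2)-f(x)}{t_2}$ appear directly as $\frac{f(b)-f(a)}{b-a}$ and $\frac{f(c)-f(a)}{c-a}$, and the first inequality of the three-slope lemma gives the claim. When $t_1 < t_2 < 0$, the same two slopes appear (after rewriting $\frac{f(x+t)-f(x)}{t} = \frac{f(x)-f(x+t)}{-t}$) as $\frac{f(c)-f(a)}{c-a}$ and $\frac{f(c)-f(b)}{c-b}$, and the second inequality yields the claim. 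Finally, when $t_1 < 0 < t_2$, the two slopes become $\frac{f(b)-f(a)}{b-a}$ and $\frac{f(c)-f(b)}{c-b}$, and chaining the two parts of the three-slope inequality through the middle slope $\frac{f(c)-f(a)}{c-a}$ closes the case.

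I do not expect any genuine obstacle: this is a textbook fact that will be used to legitimize taking the limit in the difference quotient during the subsequent proof of \cref{Prop:semilinear_convex}. The only point requiring care is the sign bookkeeping for negative $t$: the ratio $\frac{f(x+t)-f(x)}{t}$ must first be rewritten as $\frac{f(x)-f(x+t)}{-t}$ so that it matches a standard secant slope with positive denominator before the three-slope inequality can be invoked.
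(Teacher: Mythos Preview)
Your proposal is correct and matches the spirit of the paper: the paper does not actually write out a proof of \cref{Lem:convex}, stating only that it ``can be easily proven by using the definition of convexity.'' Your reduction to the three-slope inequality is exactly the standard way to unpack that remark, and your case split and sign bookkeeping are all in order.
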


Using \cref{Lem:convex}, we can prove \cref{Prop:semilinear_convex} as follows.

\begin{proof}[Proof of \cref{Prop:semilinear_convex}]
Take any $u, v \in H_0^1 (\Omega)$.
We define
\begin{align*}
\Omega^+ &= \{ x \in \Omega : v(x) \geq 0 \}, \\
\Omega^- &= \{ x \in \Omega : v(x) < 0 \}.
\end{align*}
By~(ii) and \cref{Lem:convex}, for any $x \in \Omega$ and $t \in (-1, 1) \setminus \{ 0 \}$, we get
\begin{equation*} \begin{split}
\frac{\phi (x, u(x) + tv(x)) - \phi (x, u(x))}{t}
&\leq \begin{cases}
\phi (x, u(x) + v(x)) - \phi (x, u(x)), \quad & \text{ if } x \in \Omega^+ \\
\phi (x, u(x)) - \phi(x, u(x) - v(x)), \quad & \text{ if } x \in \Omega^-
\end{cases} \\
&\leq \left| \phi (x, u(x) + |v(x)|) - \phi(x, u(x)) \right|.
\end{split} \end{equation*}
Similarly, we have
\begin{equation*}
\frac{\phi (x, u(x) + tv(x)) - \phi (x, u(x))}{t}
\geq - \left| \phi (x, u(x) - |v(x)|) - \phi (x, u(x)) \right|.
\end{equation*}
By~(i), both the upper and lower bounds for $\frac{\phi (x, u(x) + t v(x)) - \phi (x, u(x))}{t}$ obtained above are in $L^1 (\Omega)$.
Hence, invoking the dominated convergence theorem yields
\begin{equation*} \begin{split}
\lim_{t \rightarrow 0} \frac{\intO \phi (x, u+tv) \,dx - \intO \phi (x, u) \,dx}{t}
&= \intO \lim_{t \rightarrow 0} \frac{\phi (x, u+tv) - \phi (x, u)}{t} \,dx \\
&\stackrel{\text{(iii)}}{=} \intO f(x, u) v \,dx.
\end{split} \end{equation*}
Since $f (\cdot, u) \in L^2 (\Omega)$ by~(i), we deduce that~\eqref{semilinear_convex} holds.
\end{proof}

% Appendix: Proof of Thm:FEM
\section{Proof of \texorpdfstring{\cref{Thm:FEM}}{Theorem 2.4}}
\label{App:FEM}
This appendix is devoted to proving \cref{Thm:FEM}, which provides an error estimate for the finite element discretization~\eqref{model_FEM} of the variational problem~\eqref{model}.
Throughout this appendix, we denote by $u$, $u_h$, and $\tu_h$ the solutions of the variational problems~\eqref{model},~\eqref{model_FEM}, and~\eqref{model_FEM_alt}, respectively.
Note that~\eqref{model_FEM_alt} employs the same energy functional as the continuous problem~\eqref{model}, whereas~\eqref{model_FEM} employs an approximated energy functional defined in terms of the trapezoidal rule.
Accordingly, the error analysis for $u_h$ shall be two-fold: an error analysis for $\tu_h$ and an analysis for the trapezoidal rule.

We first state an optimal a priori $H^1$-error estimate for $\tu_h$~\cite{Xu:1994}.

% Lemma: Error estimate for model_FEM_alt
\begin{lemma}
\label{Lem:model_FEM_alt}
Let $u \in H_0^1 (\Omega)$ and $\tu_h \in S_h (\Omega)$ be the solutions of~\eqref{model} and~\eqref{model_FEM_alt}, respectively.
Suppose that the assumptions in \cref{Prop:semilinear_convex} and the following hold:
\begin{enumerate}[label=\emph{(\roman*)}]
\item $u \in H^2 (\Omega)$,
\item $v \mapsto f(\cdot, v)$ is Fr\'{e}chet differentiable.
\end{enumerate}
Then there exists a positive constant $C$ independent of $h$ such that
\begin{equation*}
\| u - u_h \|_{H^1 (\Omega)} \leq C h.
\end{equation*}
\end{lemma}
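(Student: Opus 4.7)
The plan is to combine the 1-strong convexity of $E$ with respect to the seminorm $|\cdot|_{H^1(\Omega)}$ (which holds verbatim as in~\eqref{E_strong} with $E_h$ replaced by $E$) with the minimization property of $\tu_h$ on $S_h(\Omega)$ and the standard nodal interpolation error estimates. Since $u$ minimizes $E$ over $H_0^1(\Omega)$, the Euler--Lagrange equation yields $E'(u) = 0$, so for any $v \in H_0^1(\Omega)$ the Bregman distance collapses to $D_E(v, u) = E(v) - E(u)$. Strong convexity together with the optimality of $\tu_h$ over $S_h(\Omega)$, applied to the trial function $I_h u$, then gives
$$
\tfrac{1}{2} |u - \tu_h|_{H^1(\Omega)}^2 \le D_E(\tu_h, u) = E(\tu_h) - E(u) \le E(I_h u) - E(u) = D_E(I_h u, u).
$$

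Next I would expand $D_E(I_h u, u)$ to extract an $O(h^2)$ rate. Writing $e = I_h u - u$, the quadratic part of $E$ contributes exactly $\tfrac{1}{2}|e|_{H^1(\Omega)}^2$ after the linear-in-$e$ terms cancel against $\langle E'(u), e \rangle = 0$. For the $\phi$-part, Fr\'echet differentiability of $v \mapsto f(\cdot, v)$ together with assumption~(iii) of \cref{Prop:semilinear_convex} justifies a pointwise second-order Taylor expansion
$$
\phi(x, I_h u) - \phi(x, u) - f(x, u)\, e(x) = \tfrac{1}{2}\tfrac{\partial f}{\partial y}(x, \xi(x))\, e(x)^2,
$$
with $\xi(x)$ lying between $u(x)$ and $I_h u(x)$. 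The standard nodal interpolation bounds $|e|_{H^1(\Omega)} \lesssim h |u|_{H^2(\Omega)}$ and $\|e\|_{L^2(\Omega)} \lesssim h^2 |u|_{H^2(\Omega)}$, which are available by assumption~(i), would then bound both resulting terms by $Ch^2$, and Poincar\'e--Friedrichs would upgrade the seminorm to the full $H^1$-norm to yield the claim.

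The main technical subtlety is to absorb the factor $\tfrac{\partial f}{\partial y}(x, \xi(x))$ in the remainder into a constant independent of $h$. This hinges on showing that $u$ and $I_h u$, and hence $\xi$, lie in a fixed bounded interval a.e.\ on $\Omega$: assumption~(i) places $u \in H^2(\Omega) \hookrightarrow C(\overline{\Omega})$ for $d \le 3$, so $u$ is bounded, and the $L^\infty$-stability of the nodal interpolation controls $\|I_h u\|_{L^\infty(\Omega)}$ uniformly in $h$. On this bounded range, Fr\'echet differentiability of $f$ supplies the uniform bound on $\partial f/\partial y$, which is the only place where hypothesis~(ii) is genuinely used, and no smallness assumption on $h$ is required.
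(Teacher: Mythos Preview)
Your argument is correct and is genuinely different from the paper's proof. The paper does not carry out a direct estimate at all: it simply observes that convexity of $\phi$ makes $E$ strongly convex, deduces that the linearized operator $-\Delta + (f(\cdot,u))'$ is nonsingular, and then invokes \cite[equation~(2.4)]{Xu:1994} as a black box. Your route instead exploits the strong convexity of $E$ quantitatively: the chain
\[
\tfrac{1}{2}\,|u-\tu_h|_{H^1(\Omega)}^2 \le E(\tu_h)-E(u) \le E(I_h u)-E(u)
\]
reduces the question to bounding the Bregman distance $D_E(I_h u,u)$, whose quadratic part is $\tfrac{1}{2}|I_h u-u|_{H^1(\Omega)}^2=O(h^2)$ and whose $\phi$-part is controlled by a pointwise second-order remainder and the $L^2$ interpolation estimate. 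This is more elementary and fully self-contained, and it makes transparent exactly where each hypothesis is used. The paper's citation-based argument, by contrast, applies to a wider class of semilinear problems (Xu's result does not require convexity of the energy, only nonsingularity of the linearization), but offers no insight specific to the present convex setting. One minor remark: your passage from Fr\'echet differentiability of the Nemytskii operator $v\mapsto f(\cdot,v)$ to a pointwise bound on $\partial f/\partial y$ over the range of $\xi$ tacitly uses that such differentiability forces $\partial f/\partial y$ to exist and act as a bounded multiplier; this is standard but worth a sentence, and you have correctly flagged it as the only substantive use of hypothesis~(ii).
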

\begin{proof}
By \cref{Prop:semilinear_convex},~\eqref{model} is equivalent to~\eqref{model_weak}.
In addition, since the functional $v \mapsto \intO \phi (\cdot, v) \,dx$ is convex, the Poincar\'{e}--Friedrichs inequality~\cite{BS:2008} implies that $E$ is strongly convex in $H_0^1 (\Omega)$.
Hence, the map $- \Delta + ( f(\cdot, u))'$, which is well-defined by~(ii), is nonsingular.
Finally, combining~(i) and~\cite[equation~(2.4)]{Xu:1994} yields the desired result.
\end{proof}

Next, we state an $L^1 (\Omega)$-error estimate for the nodal interpolation operator $I_h$.

% Lemma: Trapezoidal rule error
\begin{lemma}
\label{Lem:trapezoidal}
Let $I_h$ be the nodal interpolation operator associated with a quasi-uniform triangulation $\cT_h$ of $\Omega \subset \mathbb{R}^d$~($d = 2, 3$).
For $v \in C^0 (\overline{\Omega})$ such that $v|_T \in W^{2, \infty} (T)$ for all $T \in \cT_h$, there exists a positive constant $C$ independent of $v$ and $h$ such that
\begin{equation*}
\| v - I_h v \|_{L^1 (\Omega)} \leq C h^2 \max_{T \in \cT_h} | v |_{W^{2, \infty} (T)}.
\end{equation*}
\end{lemma}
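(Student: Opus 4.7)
The plan is to reduce the global $L^1$ bound to a sum of local estimates on each simplex $T \in \cT_h$ and then invoke a standard pointwise interpolation error estimate for piecewise linear interpolation. Specifically, writing
\begin{equation*}
\| v - I_h v \|_{L^1 (\Omega)} = \sum_{T \in \cT_h} \| v - I_h v \|_{L^1 (T)},
\end{equation*}
the trivial inclusion $L^{\infty}(T) \hookrightarrow L^1(T)$ gives $\| v - I_h v \|_{L^1 (T)} \leq |T| \, \| v - I_h v \|_{L^{\infty} (T)}$. So it suffices to show the local pointwise estimate $\| v - I_h v \|_{L^{\infty}(T)} \lesssim h_T^2 | v |_{W^{2, \infty}(T)}$, where $h_T$ is the diameter of $T$, and then sum using $\sum_T |T| = |\Omega|$ together with the quasi-uniformity bound $h_T \lesssim h$.

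The local estimate is classical and follows from a Bramble--Hilbert / Taylor expansion argument on the reference element $\widehat{T}$. First I would pull back to $\widehat{T}$ via the affine map $F_T \colon \widehat{T} \to T$, writing $\widehat{v} = v \circ F_T$. On $\widehat{T}$, the $P_1$ nodal interpolation error satisfies $\| \widehat{v} - \widehat{I} \widehat{v} \|_{L^{\infty}(\widehat{T})} \lesssim | \widehat{v} |_{W^{2, \infty}(\widehat{T})}$ by a standard Bramble--Hilbert argument (the operator $\mathrm{Id} - \widehat{I}$ annihilates affine polynomials, so a Taylor expansion of $\widehat{v}$ about any vertex leaves only the quadratic remainder, whose $L^{\infty}$ norm is bounded by $|\widehat v|_{W^{2,\infty}(\widehat T)}$). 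Then scaling back via the change of variables, using $\| D F_T \| \lesssim h_T$ for the chain rule applied to second derivatives, one obtains $| \widehat{v} |_{W^{2,\infty}(\widehat{T})} \lesssim h_T^2 | v |_{W^{2, \infty}(T)}$, while the $L^{\infty}$-norm is invariant under the pullback. Combining these yields the desired local estimate.

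Putting this together gives
\begin{equation*}
\| v - I_h v \|_{L^1(\Omega)}
\leq \sum_{T \in \cT_h} |T| \, \| v - I_h v \|_{L^{\infty}(T)}
\lesssim h^2 \max_{T \in \cT_h} | v |_{W^{2, \infty}(T)} \sum_{T \in \cT_h} |T|,
\end{equation*}
and the sum on the right is $|\Omega|$, which is a constant independent of $v$ and $h$. The quasi-uniformity of $\cT_h$ is used only to replace the local element diameter $h_T$ by the global mesh parameter $h$.

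There is no real obstacle here beyond bookkeeping; the only subtlety is that $v$ is merely piecewise $W^{2, \infty}$ rather than globally so, but this is harmless because the Bramble--Hilbert argument is applied elementwise and never needs cross-element regularity of $v$. The result is thus a direct consequence of the standard local $P_1$ interpolation theory, with the global $L^1$ structure simply converting the pointwise elementwise bound into an estimate weighted by $|T|$ that sums telescopically to $|\Omega|$.
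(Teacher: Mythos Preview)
Your proof is correct. Both you and the paper reduce to an elementwise estimate and sum, but the local arguments differ. You invoke the standard reference-element machinery: pull back to $\widehat{T}$, apply Bramble--Hilbert to get $\|\widehat{v} - \widehat{I}\widehat{v}\|_{L^{\infty}(\widehat{T})} \lesssim |\widehat{v}|_{W^{2,\infty}(\widehat{T})}$, and scale back to obtain $\|v - I_h v\|_{L^{\infty}(T)} \lesssim h_T^2 |v|_{W^{2,\infty}(T)}$, then multiply by $|T|$ and sum $\sum_T |T| = |\Omega|$. The paper instead works directly on the physical element with an elementary Rolle-type argument: since $w = v - I_h v$ vanishes at the vertices, directional derivatives along the edges vanish somewhere on each edge, and integrating the second derivatives along line segments in $T$ yields $|w|_{W^{1,\infty}(T)} \lesssim h |v|_{W^{2,\infty}(T)}$; a further integration from a vertex then gives $\|w\|_{L^1(T)} \lesssim h^{d+2} |v|_{W^{2,\infty}(T)}$, and summing over $O(h^{-d})$ elements finishes. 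Your route is the textbook one and is shorter; the paper's route is more self-contained (no reference element or abstract Bramble--Hilbert) and makes the role of shape regularity explicit through the minimum angle condition, whereas in your argument it is hidden in the bound $\|DF_T\| \lesssim h_T$.
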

\begin{proof}
Throughout this proof, let $C$ denote the generic constant independent of $v$ and $h$.
We write $w = v - I_h v$ and take any $T \in \cT_h$.
Let $x^0$ be a vertex of $T$ with the minimum solid angle, and let $e^i$, $1 \leq i \leq d$, denote a unit vector along each edge emanating from $x^0$.
Since $w = 0$ at the vertices of $T$, there exists a point $x^i \in \overline{T}$, $1 \leq i \leq d$, such that $\nabla_{e^i} w (x^i) = 0$, where $\nabla_{e^i}$ denotes the directional derivative with respect to $e^i$.
Note that, for any $x \in T$, the line segment $\overline{x^i x}$ joining $x^i$ and $x$ belongs to $T$ because $T$ is convex.
Hence, we have
\begin{equation*}
    | \nabla_{e^i} w (x) |
    = \left| \int_{\overline{x^i x}} \nabla \left( \nabla_{e^i} w \right) \cdot ds \right|
    \leq h | w |_{W^{2, \infty} (T)}
    = h |v|_{W^{2, \infty} (T)}.
\end{equation*}
It follows that
\begin{equation*}
    | w |_{W^{1, \infty} (T)} \leq C \sum_{i=1}^d | \nabla_{e^i} w |_{L^{\infty} (T)}
    \leq C h |v|_{W^{2, \infty} (T)},
\end{equation*}
where the minimum angle condition~\cite{BKK:2008} was used in the first inequality.
Since $w (x^0) = 0$, we obtain
\begin{equation}
    \label{Lem1:trapezoidal}
    \| w \|_{L^1 (T)}
    = \int_T \left| \int_{\overline{x^0 x}} \nabla w \cdot ds \right| \,dx
    \leq C h^{d+1} | w |_{W^{1, \infty} (T)}
    \leq C h^{d+2} | v |_{W^{2, \infty} (T)}.
\end{equation}
Finally, as $\cT_h$ consists of $\mathcal{O} (h^{-d})$ elements, summing~\eqref{Lem1:trapezoidal} over all $T \in \cT_h$ yields
\begin{equation*}
    \| w \|_{L^1 (\Omega)} = \sum_{T \in \cT_h} \| w \|_{L^1 (T)}
    \leq C h^2 \max_{T \in \cT_h} | v |_{W^{2,\infty}(T)},
\end{equation*}
which is our desired result.
\end{proof}

% Remark: Sobolev embedding
\begin{remark}
\label{Rem:Sobolev}
If we further assume that $d = 2$ and $v \in W^{2,1} (\Omega)$ in \cref{Lem:trapezoidal}, then one can deduce the following sharper error estimate~\cite[Theorem~4.4.20]{BS:2008} by invoking the Sobolev embedding theorem:
\begin{equation*}
    \| v - I_h v \|_{L^1 (\Omega)} \leq C h^2 |v |_{W^{2,1}(\Omega)}.
\end{equation*}
\end{remark}

Now, we are ready to prove \cref{Thm:FEM} by using \cref{Lem:model_FEM_alt,Lem:trapezoidal}.

% Proof of Thm:FEM
\begin{proof}[Proof of \cref{Thm:FEM}]
Throughout this proof, let $C$ denote the generic constant independent of $h$.
By the strong convexity of $E$ and $E_h$ with respect to the $H^1 (\Omega)$-norm, we have
\begin{equation} \begin{split}
\label{Thm2:FEM}
\| \tu_h - u_h \|_{H^1 (\Omega)}^2 &\leq C \left( E_h (\tu_h) - E_h (u_h) \right), \\
\| \tu_h - u_h \|_{H^1 (\Omega)}^2 &\leq C \left( E(u_h) - E(\tu_h) \right),
\end{split} \end{equation}
respectively.
Hence, it follows by \cref{Lem:model_FEM_alt} and~\eqref{Thm2:FEM} that
\begin{equation*} \begin{split}
\| u - u_h \|_{H^1 (\Omega)}^2
&\leq C \left( \| u - \tu_h \|_{H^1 (\Omega)^2}^2 + \| \tu_h - u_h \|_{H^1 (\Omega)}^2 \right) \\
&\leq C \left[ h^2 + \left( E_h (\tu_h) - E_h (u_h) \right) + \left( E(u_h) - E(\tu_h) \right) \right] \\
&\leq C \left[ h^2 + \left| E(u_h) - E_h (u_h) \right| + \left| E(\tu_h) - E_h (\tu_h) \right| \right].
\end{split} \end{equation*}
Invoking \cref{Lem:trapezoidal}, we have
\begin{equation*}
 \left| E(u_h) - E_h (u_h) \right|
 \leq \| w - I_h w \|_{L^1 (\Omega )}
 \leq C h^2 \max_{T \in \cT_h} | w |_{W^{2, \infty} (T) },
\end{equation*}
where $w = \phi (\cdot, u_h)$.
The term $| E (\tu_h) - E_h (\tu_h) | $ can be estimated in the same manner.
In summary, we obtain~\eqref{Thm1:FEM}, which is our desired result.
\end{proof}

% Appendix: Proof of Thm:conv
\section{Proof of \texorpdfstring{\cref{Thm:conv}}{Theorem 3.3}}
\label{App:conv}
In this appendix, we provide a proof of \cref{Thm:conv}.
First, in \cref{Lem:ASM}, we present the generalized additive Schwarz lemma for convex optimization, which was introduced in~\cite{Park:2020,Park:2022a}.
We note that the proof of \cref{Lem:ASM} can be established without major difficulty by closely following the arguments presented in~\cite[Lemma~3.4]{LP:2022}.

% Lemma: Generalized additive Schwarz lemma
\begin{lemma}
\label{Lem:ASM}
Let $\{ u^{(n)} \}$ be the sequence generated by \cref{Alg:ASM}.
Then it satisfies
\begin{equation*}
    u^{(n+1)} \in \operatornamewithlimits{\arg\min}_{u \in V} \left\{ F_h (u^{(n)}) + \langle F_h' (u^{(n)}), u - u^{(n)} \rangle + M_{\tau} (u, u^{(n)}) \right\},
\end{equation*}
where the energy functional $E_h$ of~\eqref{model_FEM} admits the decomposition~\eqref{composite} and the functional $M_{\tau} \colon V \times V \rightarrow \mathbb{R}$ is given by
\begin{multline}
\label{M}
    M_{\tau} (u,v) = \tau \inf \Bigg\{ \sum_{k=1}^N \left( D_{F_h} (v +R_k^* w_k, v) + G_h (v + R_k^* w_k) \right) \\
    :
    u - v = \tau \sum_{k=1}^N R_k^* w_k,
    \text{ } w_k \in V_k \Bigg\}
    + (1 - \tau N ) G_h (v),
    \quad u,v \in V.
\end{multline}
\end{lemma}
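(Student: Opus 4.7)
The plan is to unfold the definition of $M_\tau$ inside the argmin, fuse the outer minimization over $u$ with the inner infimum over admissible decompositions $(w_k)$, and then exploit separability of the resulting sum across the subspaces $V_k$. Once the problem decouples, each subproblem should reduce—via a one-line Bregman-distance identity—exactly to the local minimization performed in \cref{Alg:ASM}.

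Concretely, write $v = u^{(n)}$. Since $F_h(v)$ is a constant in $u$, it is equivalent to minimize
\begin{equation*}
\Phi(u) := \langle F_h'(v), u - v \rangle + M_\tau(u, v)
\end{equation*}
over $u \in V$. Substituting the definition~\eqref{M} of $M_\tau$, and noting that the term $(1-\tau N) G_h(v)$ is additive constant while the constraint $u - v = \tau \sum_{k=1}^N R_k^* w_k$ establishes a one-to-one correspondence between admissible pairs $(u, (w_k))$, the combined problem $\min_{u} \Phi(u)$ becomes the single minimization
\begin{equation*}
\min_{(w_k) \in \prod_{k=1}^N V_k} \tau \sum_{k=1}^N \Big[ \langle F_h'(v), R_k^* w_k \rangle + D_{F_h}(v + R_k^* w_k, v) + G_h(v + R_k^* w_k) \Big].
\end{equation*}
This objective is separable: the $k$-th summand depends only on $w_k$. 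Moreover, by the definition~\eqref{Bregman} of the Bregman distance,
\begin{equation*}
\langle F_h'(v), R_k^* w_k \rangle + D_{F_h}(v + R_k^* w_k, v) = F_h(v + R_k^* w_k) - F_h(v),
\end{equation*}
so that, using the composite structure~\eqref{composite}, the $k$-th summand collapses to $E_h(v + R_k^* w_k) - F_h(v)$. Minimizing each summand independently over $w_k \in V_k$ is therefore the same as the local problem $\min_{w_k \in V_k} E_h(v + R_k^* w_k)$, whose solution is $w_k^{(n+1)}$ by the definition of \cref{Alg:ASM}. Reassembling through the constraint yields $u^{(n+1)} = v + \tau \sum_{k=1}^N R_k^* w_k^{(n+1)}$ as a minimizer of $\Phi$, which is exactly the claim.

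The only delicate step is the legitimacy of absorbing the inner infimum appearing in $M_\tau(u,v)$ into the outer $\min_u$ as a single joint minimization over $(w_k)$. This is justified because the space decomposition~\eqref{space_decomp}, combined with $\tau > 0$, ensures the constraint set in~\eqref{M} is nonempty for every $u \in V$, and every $(w_k) \in \prod_k V_k$ determines a unique $u \in V$ via $u = v + \tau \sum_k R_k^* w_k$. The remaining ingredients—the Bregman identity and the separability argument—are purely algebraic and require no further structural assumptions on $F_h$ or $G_h$ beyond differentiability of $F_h$.
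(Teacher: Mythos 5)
Your proof is correct and takes essentially the same approach that the paper points to via its citation of [LP:2022, Lemma 3.4]: unfold $M_\tau$, fuse the inner infimum with the outer minimization into a joint problem over $(w_k)$, exploit separability across the subspaces $V_k$, and use the Bregman identity $\langle F_h'(v), R_k^* w_k\rangle + D_{F_h}(v + R_k^* w_k, v) = F_h(v + R_k^* w_k) - F_h(v)$ to reduce each summand to $E_h(v + R_k^* w_k) - F_h(v)$. You also correctly note the point about nonemptiness of the constraint set guaranteed by the space decomposition, which justifies the fusion of the two optimizations.
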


Next, we introduce the following useful result, which is a rephrasing of~\cite[Lemma~4.6]{Park:2020} in a form suitable for our purpose.

% Lemma: Bounds of the additive Schwarz distance M
\begin{lemma}
\label{Lem:M_bound}
Suppose that \cref{Ass:stable,Ass:convex} hold.
For $\tau \in (0, \tau_0]$, we have
\begin{multline*}
    D_{F_h} (u,v) + G_h (u)
    \leq M_{\tau} (u,v) \\
    \leq \frac{C_0^2}{2 \tau} | u - v|_{H^1 (\Omega)}^2 + \tau G_h \left( \frac{1}{\tau} u - \left( \frac{1}{\tau} - 1 \right) v \right) + (1 - \tau) G_h (v),
    \quad u, v \in V.
\end{multline*}
where the functional $M_{\tau}$ was given in~\eqref{M}.
\end{lemma}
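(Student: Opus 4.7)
The plan is to prove the two-sided bound on $M_\tau(u,v)$ by treating each inequality separately. The upper bound is obtained by exhibiting a particular feasible competitor in the infimum from~\eqref{M} supplied by \cref{Ass:stable}, while the lower bound requires combining \cref{Ass:convex} with a Bregman-distance algebraic identity and then passing to the infimum.

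For the upper bound, the key observation is that the infimum in~\eqref{M} runs over decompositions of the form $u - v = \tau \sum_k R_k^* w_k$, whereas \cref{Ass:stable} produces decompositions of the form $u^\sharp - v = \sum_k R_k^* w_k^\sharp$. I therefore apply \cref{Ass:stable} to the rescaled pair $(u', v)$ with $u' := v + \tau^{-1}(u - v) = \tau^{-1} u - (\tau^{-1} - 1) v$. The resulting decomposition $\{w_k\}$ satisfies $\tau \sum_k R_k^* w_k = u - v$, so it is feasible in~\eqref{M}. Substituting the two bounds from \cref{Ass:stable}---namely $\sum_k D_{F_h}(v + R_k^* w_k, v) \leq \tfrac{C_0^2}{2}|u' - v|_{H^1(\Omega)}^2 = \tfrac{C_0^2}{2\tau^2}|u - v|_{H^1(\Omega)}^2$ and $\sum_k G_h(v + R_k^* w_k) \leq G_h(u') + (N - 1) G_h(v)$---and collecting the $G_h(v)$ terms via $\tau(N-1) + (1 - \tau N) = 1 - \tau$ yields exactly the stated upper bound.

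For the lower bound, I fix any feasible competitor $\{w_k\}$ with $u - v = \tau \sum_k R_k^* w_k$. Expanding each Bregman distance and using this identity to cancel the linear-in-$F_h'(v)$ terms produces the key algebraic identity
\begin{equation*}
\tau \sum_{k=1}^N D_{F_h}(v + R_k^* w_k, v) - D_{F_h}(u, v) = \tau \sum_{k=1}^N F_h(v + R_k^* w_k) + (1 - \tau N) F_h(v) - F_h(u).
\end{equation*}
On the other hand, substituting $E_h = F_h + G_h$ into \cref{Ass:convex} and rearranging shows that the right-hand side above is bounded below by $G_h(u) - \tau \sum_k G_h(v + R_k^* w_k) - (1 - \tau N) G_h(v)$. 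Combining these two relations and rearranging yields
\begin{equation*}
D_{F_h}(u, v) + G_h(u) \leq \tau \sum_{k=1}^N \left[ D_{F_h}(v + R_k^* w_k, v) + G_h(v + R_k^* w_k) \right] + (1 - \tau N) G_h(v),
\end{equation*}
and taking the infimum over all feasible $\{w_k\}$ produces the lower bound on $M_\tau(u,v)$.

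The only genuine subtlety is the algebraic bookkeeping underlying the identity above; once that cancellation is observed, both inequalities follow transparently from \cref{Ass:stable} and \cref{Ass:convex}, respectively, and no additional analytic ingredients are needed beyond the convexity of $G_h$, which is already implicit in the hypotheses.
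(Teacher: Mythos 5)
Your proof is correct. For the upper bound, you correctly identify that the infimum in~\eqref{M} is taken over decompositions scaled by $\tau$, and resolve this by invoking \cref{Ass:stable} for the pair $(u',v)$ with $u' = \tau^{-1}u - (\tau^{-1}-1)v$, which produces a feasible competitor; the bookkeeping $\tau(N-1) + (1-\tau N) = 1-\tau$ then yields exactly the stated bound. For the lower bound, your algebraic identity is verified directly by expanding the Bregman distances and using the constraint $u - v = \tau\sum_k R_k^* w_k$ to collapse the linear terms, and \cref{Ass:convex} applied to $E_h = F_h + G_h$ supplies precisely the inequality needed to close the argument before passing to the infimum. The paper itself does not exhibit a proof of this lemma but cites it as a rephrasing of Lemma~4.6 in~\cite{Park:2020}; your argument is the standard one underlying that result, so the approaches coincide in substance. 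The only nit is your closing remark that the convexity of $G_h$ is the extra ingredient — in fact your lower-bound argument uses only \cref{Ass:convex} (strengthened convexity of $E_h$) together with the Bregman-distance algebra and never invokes the convexity of $G_h$ separately; this does not affect correctness.
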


Recall that the functional $F_h$ given in~\eqref{composite} is assumed to be $\mu$-strongly convex with respect to the seminorm $| \cdot |_{H^1 (\Omega)}$ for some $\mu \in [0, 1]$; see~\eqref{E_strong} for the definition of strong convexity.
Since $F_h$ is G\^{a}teaux differentiable, it satisfies
\begin{equation}
\label{F_strong}
D_{F_h} (u,v) \geq \frac{\mu}{2} | u - v |_{H^1 (\Omega)}^2,
\quad u, v \in V,
\end{equation}
where the Bregman distance $D_{F_h}$ was defined in~\eqref{Bregman}; see, e.g.,~\cite[Theorem~2.1.9]{Nesterov:2018}.

With the above ingredients, we are ready to prove \cref{Thm:conv} as follows.

\begin{proof}[Proof of \cref{Thm:conv}]
We take any $n \geq 0$.
For $u \in V$, we write
\begin{equation*}
\tilde{u} = \frac{1}{\tau} u - \left( \frac{1}{\tau} - 1 \right) u^{(n)}.
\end{equation*}
Then we have
\begin{equation}
\label{Thm1:conv}
\begin{split}
    E_h (u^{(n+1)})
    &\stackrel{\text{(i)}}{\leq} F_h (u^{(n)}) + \langle F_h '(u^{(n)}), u^{(n+1)} - u^{(n)} \rangle + M_{\tau} (u^{(n+1)}, u^{(n)}) \\
    &\stackrel{\text{(ii)}}{=} \min_{u \in V} \left\{ F_h (u^{(n)}) + \langle F_h' (u^{(n)}), u - u^{(n)} \rangle + M_{\tau} (u, u^{(n)}) \right\} \\
    &\stackrel{\text{(i)}}{\leq} \min_{u \in V} \bigg\{ F_h (u^{(n)}) + \tau \langle F_h' (u^{(n)}), \tilde{u} - u^{(n)} \rangle + \frac{\tau C_0^2}{ 2} | \tilde{u}  - u^{(n)} |_{H^1 (\Omega)}^2 \\
    &\quad\quad + \tau G_h ( \tilde{u} ) + (1 - \tau) G_h (u^{(n)}) \bigg\} \\
    &\stackrel{\eqref{F_strong}}{\leq} \min_{u \in V} \left\{ (1 - \tau) E_h (u^{(n)}) + \tau E_h (\tilde{u}) + \frac{\tau (C_0^2 - \mu)}{2} | \tilde{u} - u^{(n)}|_{H^1 (\Omega)}^2 \right\},
\end{split}
\end{equation}
where (i) and (ii) are established through Lemmas~\ref{Lem:M_bound} and~\ref{Lem:ASM}, respectively.
Substituting $u$ in the last line of~\eqref{Thm1:conv} with $tu_h + (1 - t) u^{(n)}$ for $t \in [0, \tau]$, we obtain
\begin{equation}
\label{Thm2:conv}
\begin{split}
    E_h (u^{(n+1)})
    &\leq \min_{t \in [0, \tau]} \bigg\{ (1 - \tau) E_h (u^{(n)}) + \tau E_h \left( \frac{t}{\tau}u_h + \left(1 - \frac{t}{\tau} \right) u^{(n)} \right) \\
    &\quad\quad + \frac{(C_0^2 - \mu) t^2}{2\tau} |u_h - u^{(n)}|_{H^1 (\Omega)}^2 \bigg\} \\
    &\stackrel{\eqref{E_strong}}{\leq} \min_{t \in [0, \tau]} \bigg\{ (1-t) E_h (u^{(n)}) + t E_h (u_h) \\
    &\quad\quad + \frac{(C_0^2 + 1 - \mu) t^2 - \tau t}{2 \tau} | u_h - u^{(n)} |_{H^1 (\Omega)}^2 \bigg\}.
\end{split}
\end{equation}

Next, we consider two cases $C_0^2 \geq \mu$ and $C_0^2 < \mu$ separately.
On the one hand, when $C_0^2 \geq \mu$, setting $t = \frac{\tau}{C_0^2 + 1 - \mu} \in [0,\tau]$ in the last line of~\eqref{Thm2:conv} yields
\begin{equation}
\label{Thm3:conv}
E_h (u^{(n+1)}) - E_h (u_h) \leq \left(1 - \frac{\tau}{C_0^2 + 1 - \mu} \right) (E_h (u^{(n)}) - E_h (u_h)).
\end{equation}
On the other hand, when $C_0^2 < \mu$, setting $t = \tau$ in the last line of~\eqref{Thm2:conv} yields
\begin{equation}
\label{Thm4:conv}
\begin{split}
E_h (u^{(n+1)}) - E_h (u_h) &\leq (1 - \tau) (E_h (u^{(n)}) - E_h (u_h)) + \frac{\tau (C_0^2 - \mu)}{2} |u_h - u^{(n)}|_{H^1 (\Omega)}^2 \\
&\leq (1 - \tau) (E_h (u^{(n)}) - E_h (u_h)).
\end{split}
\end{equation}
By combining~\eqref{Thm3:conv} and~\eqref{Thm4:conv}, we obtain the desired result.
\end{proof}

Recall that we have $\mu = 1$ in the setting~\eqref{FG}.
Hence, we deduce that \cref{Alg:ASM} satisfies the estimate~\eqref{Cor1:conv}.

% Remark: Convergence theorem in Park:2020
\begin{remark}
\label{Rem:Park:2020}
    As mentioned earlier, we can alternatively apply the existing convergence theorem presented in~\cite[Theorem~4.8]{Park:2020} to estimate the linear convergence rate of \cref{Alg:ASM}.
    In this case, we obtain
    \begin{equation*}
        \frac{E_h (u^{(n+1)}) - E_h (u_h)}{E_h (u^{(n)}) - E_h (u_h)} \leq 1 - \frac{\tau}{2} \min \left\{ 1, \frac{1}{2C_0^2} \right\},
        \quad n \geq 0,
    \end{equation*}
    which is strictly weaker than \cref{Thm:conv} when $\mu = 1$.
    Hence, we conclude that \cref{Thm:conv} provides a better estimate than~\cite[Theorem~4.8]{Park:2020}.
    The reason is that \cref{Thm:conv} relies on stronger conditions, namely the strong convexity of both $E_h$ and $F_h$~(see~\eqref{E_strong} and~\eqref{F_strong}), whereas~\cite[Theorem~4.8]{Park:2020} is based solely on the sharpness condition~\eqref{E_sharp} of $E_h$.
\end{remark}

\bibliographystyle{siamplain}
\bibliography{refs_Schwarz_semilinear}
\end{document}